\documentclass[journal]{IEEEtran}
\usepackage{mathrsfs}
\usepackage{subfigure}
\usepackage[pdftex]{graphicx}
\usepackage{bm,amssymb,graphics,amsthm,amsopn,amstext,amsfonts,color,fullpage,hyperref}
\usepackage[cmex10]{amsmath}
\interdisplaylinepenalty=2500
\usepackage{longtable}
\usepackage{subfigure}
\usepackage{fullpage}
\usepackage{xcolor}

\usepackage{cite}
\usepackage[square,sort,comma,numbers]{natbib}
\usepackage{paralist}
\usepackage[]{authblk}
\usepackage{float}
\usepackage[bottom]{footmisc}
\usepackage{graphicx}
\usepackage{lineno}
\usepackage{natbib}
\usepackage{algorithm}
\usepackage[noend]{algpseudocode}
\usepackage{mathtools}

\makeatletter
\def\BState{\State\hskip-\ALG@thistlm}
\makeatother
\usepackage{dblfloatfix}
\usepackage{array}
\newcolumntype{P}[1]{>{\centering\arraybackslash}p{#1}}
\newcolumntype{M}[1]{>{\centering\arraybackslash}m{#1}}

\newcommand{\epc}{\hspace{1pc}}

\def\TheoremsNumberedThrough{%
\theoremstyle{TH}%
\newtheorem{theorem}{Theorem}
\newtheorem{lemma}{Lemma}
\newtheorem{proposition}{Proposition}

\theoremstyle{EX}

\newtheorem{definition}{Definition}

}

\TheoremsNumberedThrough

\hyphenation{op-tical net-works semi-conduc-tor}

\usepackage{caption,tabularx,booktabs}

\begin{document}

\title{A Game-theoretic Approach for Dynamic Service Scheduling at Charging Facilities}

\author{Leila~Hajibabai 
	and~Amir~Mirheli 
	\thanks{L. Hajibabai is with the Department of Industrial and Systems Engineering, North Carolina State University, NC 27695, USA. e-mail: lhajiba@ncsu.edu, to whom correspondence should be addressed.}
	\thanks{A. Mirheli is with the Operations Research Program, North Carolina State University, NC 27695, USA. e-mail: amirhel@ncsu.edu.}
}

%

\maketitle
\vspace{-3mm}
\begin{abstract}%
Electric vehicle (EV) charging patterns are highly uncertain in both location, time, and duration particularly in association with the predicted high demand for electric mobility in the future. An EV can be charged at home, at charging stations near highway ramps, or on parking lots next to office buildings, shops, airports, among other locations. Charging time and duration can be fixed and continuous or flexible and intermittent. EV user preferences of charging services depend on many factors (e.g., charging prices, choice of destinations), causing EV charging patterns to shift in real-time. Hence, there is a need for a highly flexible EV charging network to support the rapid adoption of the technology. This study presents a dynamic scheduling scheme for EV charging facilities considering uncertainties in charging demand, charger availability, and charging rate.
The problem is formulated as a dynamic programming model that minimizes the travel and waiting costs and charging expenses while penalizing overcharging attempts. An integrated generalized Nash equilibrium technique is introduced to solve the problem that incorporates a Monte Carlo tree search algorithm to efficiently capture the uncertainties and approximate the value function of the dynamic program. Numerical experiments on hypothetical and real-world networks confirm the solution quality and computational efficiency of the proposed methodology. This study will promote EV adoption and support environmental sustainability by helping users lower the charging spot search burden via a real-time, user-adaptive optimizer. Stakeholders can retrieve charger utilization and pricing data and get feedback on their charging network policies.
\end{abstract}

\begin{IEEEkeywords}
Electric vehicle; Dynamic programming; Generalized Nash equilibrium; Scheduling; Monte Carlo tree search; Shooting heuristic.
\end{IEEEkeywords}

\IEEEpeerreviewmaketitle

\section{Introduction}\label{intro}
\IEEEPARstart{E}{lectric} 
vehicles promise huge benefits to society by supporting environmental sustainability and shifting our energy consumption in the transportation sector toward renewable sources.
Predicted by Bloomberg \citep{finance2020electric}, 55\% of new sales of automobiles world wide will be EVs by 2040. Rapid adoption of EVs requires the development of a highly flexible EV charging network \citep{finance2020electric,cazzola2016global} with a wide variety of charging service options at different locations to satisfy diversified charging requests from EV users.
%
%
Recent ongoing government and industry plans to deploy charging infrastructure are promising and particularly, electric utilities support the charging network expansions \citep{outlook2019electric}. 
However, the market size of charging infrastructure grows much slower than the charging demand.
%
On the other hand, recent advancements in battery technology has improved EV driving range but the charging rate of EVs still remains slow.
For instance, typical EVs with medium-sized battery (i.e., $16-24\, kWh$) are fully charged within several hours with level 1 chargers. Fully charging contemporary EVs with $\sim 50\, kWh$ battery would still take $\sim 30\, min$, even with Level 3 fast chargers that benefit from direct-current (DC).
The majority of current EVs are not capable of DC fast charging due to the connector types and need to park in charging facilities for hours to recharge \citep{duke}. 
Long-term parking and charging periods, especially with a static cost for each charging attempt, can accumulate unserved EV users, particularly in high-demand areas.
Therefore, the charging network needs to be significantly expanded and properly managed to sustain the transition to EVs. 
The charging facility scheduling problem is highly dynamic considering the stochastic nature of EVs' state-of-charge (SOC) and travel plan, charger availability, and charging rate over time and space.
It is beneficial to develop a user-adaptive framework to dynamically schedule the utilization of existing charging facilities. 
The optimal schedule shall satisfy the charging demand while minimizing the travel, charging, and waiting costs of each individual user.   
The optimal charger allocation plans will help share real-time data on the occupancy and waiting time of charging facilities and expand their usage.
Figure \ref{fig:intro} presents various factors that affect EV user decisions with respect to charger choices. 
As indicated, two charger types at various locations, each with a different pricing scheme, are offered. Users choose chargers that match their criteria (e.g., proximity to destinations, pricing scheme) and minimize their costs based on their travel destinations.
%
%
	%
	\begin{figure}[H]
		\begin{center}
			\includegraphics[height=0.83in]{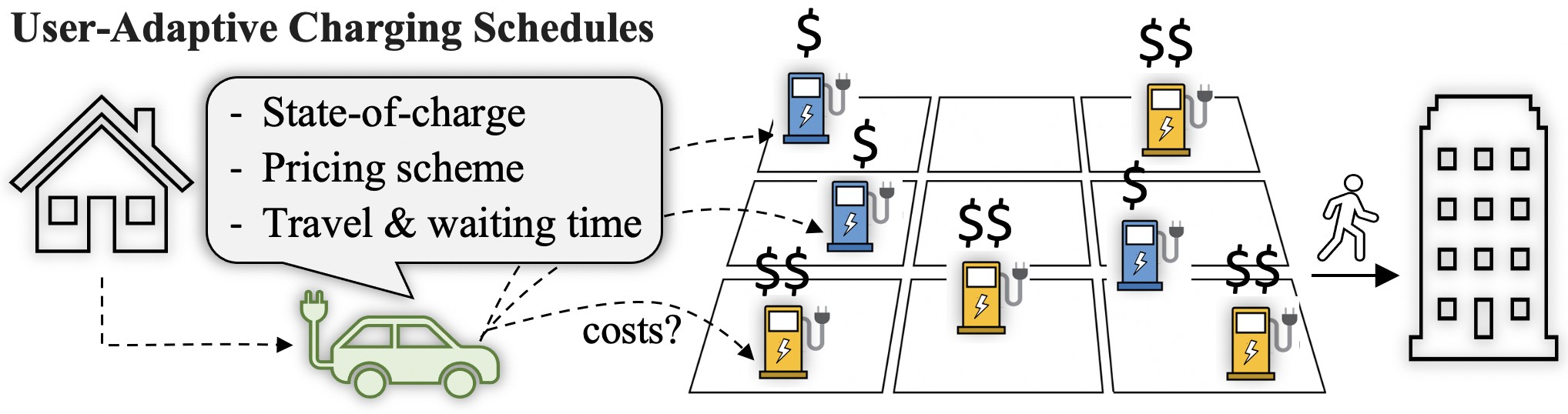}
	\vspace{-20pt}			
	\caption{EV user charging behavior given different charger types and user-centric factors.}
			\label{fig:intro}
		\end{center}
	\end{figure}
	\vspace{-10pt}

This paper proposes a dynamic EV charging scheduling procedure under uncertain charging demand, charger availability, and charging rate. The proposed problem is formulated as a DP with dynamic user behavior and mixed-integer decisions that minimizes the total user costs, including (a) travel costs and waiting time to start the service and (b) charging expenses. The model aims to find the optimal charging duration and spot assignment for each EV user over the planning horizon, given their subsequent travel plans and reactions toward charging prices and waiting time to get served.
A generalized Nash equilibrium (GNE) based technique, coupled with a Monte Carlo tree search (MCTS), with an embedded consensus-based coordination and shooting heuristic is implemented to determine the optimal schedule of EV chargers (i.e., the required number of charging time periods) in parking areas and optimally assign users to charging spots at each time period. 
In particular, the methodology employs a stochastic look-ahead technique that first models a GNE to distribute the problem to EV user-level models. A consensus-based coordination scheme is incorporated into the GNE procedure to push the user-level solutions toward system-level optimality and find near-optimal solutions.
Then, an MCTS algorithm, with an embedded tree policy and shooting heuristic,
is implemented to reduce the expanded search and approximate the value function of the dynamic program. The tree policy evaluates the available actions and estimates the value functions over time, while the
shooting heuristic predicts the value of recently added tree nodes to the tree to determine optimal charging schedules (i.e., number of time periods to charge and charging spot assignments at each time period).
The numerical results reveal that the proposed algorithm can solve the problem efficiently.  

The exposition of the papers follows. Section \ref{sec:litRev} will review the existing literature on relevant service facility logistics. Section \ref{sec:modelformulation} presents the model formulation. Section \ref{sec:Algorithm} illustrates the methodology that integrates GNE and MCTS with embedded tree policy and shooting heuristic. Numerical results are detailed in Section \ref{sec:numericalresults}. And, Section \ref{sec:conclusion} summarizes the concluding remarks.

\vspace{-1mm}
\section{Literature Review}\label{sec:litRev}
This section summarizes the literature on (i) park \& charge scheduling, (ii) capturing stochastic effects, (iii) dynamic control, and (iv) game-theoretic strategies, particularly in the context of EV charging facility logistics.

\vspace{-3mm}
\subsection{Park \& Charge Scheduling}
Literature has shown park-and-charge studies that incorporate charging facilities into parking lots to avoid charging complexities within trip chains. In particular, EVs get charged while parked at a destination. For instance, \cite{kuran2015smart} have proposed an optimal recharge scheduling scheme for parking areas on a daily basis to identify charging spot, time, and amount based on EVs' arrival and departure time, SOC, and travel range.
%
%
The proposed model aims to maximize the total (i) parking lot revenue  and (ii)  number of served EVs (whose charging requirements are satisfied by the departure time). 
The model is formulated as a two-layered problem, where each layer is optimized independently.  
%
Similarly, \cite{huang2012scheduling} have implemented various scheduling algorithms (i.e., first come first serve, earliest deadline first, shortest job first, and longest job first) to design plug-in hybrid EV (PHEV) charging schedules, where earliest deadline first outperforms the rest. 
The study assumes a fixed charging rate to schedule PHEV chargers. 
%
%
\cite{yao2016real} have presented a real-time charge scheduling scheme that accommodates demand response programs in a parking facility. The response programs restore the balance between charging demand and supply in a smart grid and optimize the charging schedule. The optimization aims to simultaneously (i) maximize the number of EVs selected for charging at each time period and (ii) minimize the EVs' utility payments by choosing proper time slots to charge. The study applies an on/off strategy for the charge scheduling as a binary optimization problem. A hybrid technique is implemented that combines a linear programming and a modified convex relaxation. The proposed approach requires a significant deployment cost to install chargers at all spots and apply the on/off strategy. 
Furthermore, \cite{he2021charging} has developed a fleet charging and repositioning framework to satisfy the shared electrified mobility demand. Given the location and capacity of charging facilities by the network operator, the number of relocated EVs will be determined within the region to address the asymmetric demand. The problem is formulated as a queuing-location model that aims to maximize the operator's expected annual profit while the facility deployment and EV repositioning costs are minimized. 
The study does not incorporate the dynamic charging policy of each EV user in their approach. 

\vspace{-3mm}
\subsection{Capturing Stochastic Effects}
A number of studies in the literature have evaluated possible uncertainties in EV charging schedules. For example,  
\cite{wu2017two} have proposed a two-stage stochastic optimization model to schedule EV charging under uncertain EV arrival time, charging price, and charging demand upon arrival. The study aims to optimize the charging loads by minimizing the expected operational costs and the number of unserved users within a fixed time-window captured from the arrival time of each EV to the charging station. They have applied a Monte Carlo based sample-average approximation technique and an L-shaped method to solve the problem. The proposed model does not account for the uncertainty in charging rates as it may increase the number of unserved users due to insufficient charge for their next trips. 
%
%
\cite{lu2018optimal} have proposed a multi-layer time-space network to schedule a fleet of taxis, including EVs and internal combustion engine vehicles with advanced reservations. The problem is formulated as an integer multi-commodity network flow assuming uncapacitated charging stations that require significant capital investments. The study assumes a fixed charging time that impedes EV users to have flexible charging schedules. 
%
%
%
On the other hand, \cite{nourinejad2016equilibrium} have further considered the impact of vehicle-to-grid (V2G) technology on power distribution network operators that impose changes in trip schedules when the V2G payments are high. The scheduling problem is formulated as a bi-level optimization program with the equilibrium activity patterns in the lower level as a result of the upper-level pricing and AC power distribution. The problem is solved using the method of successive averages to find the local optimum. The proposed non-convex formulation has imposed challenges on finding system-level optimal solutions. 

\vspace{-4mm}
\subsection{Dynamic Control}
Furthermore, \cite{sweda2017optimal} have developed a recharging policy to determine the optimal amount to recharge at each charging station. A dynamic model that uses forward recursion is proposed to minimize total EV charging costs by reducing the number of stops and avoiding overcharging attempts. This study has implemented heuristic methods for continuous charging along a path with equally-distant charging stations.
Additionally, \cite{sweda2017adaptive} have proposed a dynamic programming (DP) approach to determine the optimal routing and recharging policy in a network with stochastic charging station availability. The proposed model aims to minimize travel time, waiting time, and charging costs. The state variables are the location and SOC of EV users as well as charging station availability at each time period. 
The optimal policy represents the amount of charge needed at the current location and the route to the subsequent charging stop.
The proposed DP-based approach uses an optimal priori policy for a grid network that reduces the computational burden by dropping the station availability state from the value function to simplify finding the feasible paths. However, the problem is still intractable, and suffers from a large state space as the set of feasible paths dynamically changes due to the charging station availability that affects the routing decisions. 
\cite{schneider2014electric} have introduced a recharging station and routing problem under time-window constraints. This study aims to minimize the number of employed EV drivers and the total distance traveled for last-mile delivery carriers. A hybrid heuristic approach integrating a variable neighborhood search and a tabu search is proposed to find near-optimal solutions. 
\cite{tang2019robust} have incorporated the driving range constraints and charging station capacity into a scheduling problem for electric buses. A static model is introduced as a buffer-distance strategy to address the trip time stochasticity and reduce the en-route breakdown rate of electric buses. Additionally, a dynamic model is used to update the road traffic condition and continuously schedule the electric bus fleet. However, the proposed branch and bound is unable to solve large-scale cases, especially for the dynamic scheduling strategy. 

The introduction of uncertainties imposes additional complexity to the problem. Literature has presented research efforts on the implementation of DP approaches with discrete state and action spaces \citep{godfrey2002adaptive,qian2017time}. 
%
%
%
Additionally, existing efforts utilize shooting heuristic (SH), introduced by Newell \citep{newell1993simplified,newell2002simplified}, to solve boundary value problems in DPs \citep{betts2010practical}. This approach aims to reach a targeted final boundary state following a dynamic system. For instance, \cite{murat2016global} have utilized SH in a facility location and capacity acquisition problem on a line with dense demand. The study presents a DP with two-point boundary values and solves it with SH.
\cite{zhou2017parsimonious} have also applied a parsimonious SH algorithm to smoothen the vehicle trajectories moving toward a signalized intersection by controlling their acceleration profiles. In this variation of SH, each infinite-dimensional vehicle trajectory is introduced by a few segments of analytical quadratic curves. The method builds a large number of vehicle trajectories with physical restrictions, traffic signal timing, and car following safety. 
Similarly, \cite{guo2019joint} have proposed a DP-SH algorithm for a problem, including a connected automated vehicle (CAV) trajectory optimization for an intersection control with a mixed traffic of CAVs and human-driven vehicles. The SH generates near-optimal trajectories for vehicles in a platoon, while DP adjusts traffic signals for a given trajectory.  

\vspace{-3mm}
\subsection{Game-theoretic Strategies}
Another category of research has approached the scheduling problems using game-theoretic models. In particular, a generalized Nash equilibrium (GNE) studies the inter-relationships between system users as a Nash game, where the strategy set of each player relies on the other players' strategies \citep{suwansirikul1987equilibrium,facchinei2007generalized,meunier2010equilibrium}.
For example, \cite{wright2010dynamic} have formulated a revenue management strategy for airline alliances as a Markov game model, where the decision of one airline in interline itineraries may result in a sub-optimal revenue for the alliance. While the Markovian transfer price fails to coordinate an arbitrary alliance, the equilibrium acceptance policy is derived using the value functions from each airline's dynamic model. The proposed methodology does not incorporate customer feedback on airline pricing strategy. 
Furthermore, \cite{corman2020interactions} have proposed a microscopic railway traffic optimization model to simultaneously control railway traffic in real-time while minimizing passenger travel time. A Nash equilibrium is achieved given the availability of information on train and customer arrivals (e.g., first come first serve and timetable with no delays). The proposed methodology does not consider the stochasticity involved in information about future railway states. 
Literature has shown the relationship between GNE and quasi-variational inequalities (QVIs) \citep{bensoussan1974points}. For instance, \cite{zhou2005generalized} have proposed a bi-level transit fare equilibrium model, where the upper level represents a non-cooperative game between multiple transit operators, modeled as a QVI, who adjust the fare to maximize their own profits. The lower level formulates a stochastic user equilibrium transit assignment model.

\vspace{-3mm}
\subsection{Summary}\label{sec:Summary}
The aforementioned studies have overlooked the uncertainties in EV charging rates and the availability of charging options for EV users with flexible arrival and departure times. Table \ref{table:literature} summarizes the relevant contributions and research gaps. This study aims to solve the dynamic EV charging scheduling considering the uncertainties in charging demand, charger availability, and charging rate. Given the dynamic user behaviors and mixed-integer decisions, an integrated solution technique is developed that includes (i) a GNE procedure with a consensus-based coordination scheme to find near-optimal charging schedules and (ii) a Monte Carlo tree search algorithm with an embedded SH and a stochastic look-ahead technique to reduced expanded search and approximate the value function of the dynamic program. Next section presents the proposed model formulation.

\begin{center}
\begin{table*}[t]%
\caption{Summary of relevant literature.\label{table:literature}}
\centering
\scriptsize
\begin{tabular*}{450pt}{@{\extracolsep\fill}lcccc l@{\extracolsep\fill}}
 	\hline\hline
\toprule
&\multicolumn{4}{@{}c@{}@{}@{}}{\textbf{Research Components}} &  \\
\textbf{Contribution/Methodology} & \textbf{Park \& Charge}  & \textbf{Stochasticities}  & \textbf{Dynamics} & \textbf{Game-theory} & \multicolumn{1}{l@{}}{\textbf{Authors}}   \\
\midrule
\hline
Recharge scheduling optimization: & \checkmark & & & &\cite{kuran2015smart}\\
\epc independent two layer model&&&&&\\
Comparison of scheduling algorithms:  &\checkmark&&&&\cite{huang2012scheduling}\\
\epc FCFS, earliest deadline, shortest \& longest job&&&&&\\
Real-time demand response model:  &\checkmark&&&&\cite{yao2016real}\\
\epc linear program \& modified convex relaxation&&&&&\\
Fleet charging \& repositioning:  &\checkmark&&&&\cite{he2021charging}\\
\epc queuing-location model&&&&&\\
Two-stage stochastic optimization: Monte Carlo  &\checkmark&\checkmark&&&\cite{wu2017two}\\
\epc approximation \& L-shaped method&&&&&\\
Integer multi-commodity network flow:  &\checkmark&\checkmark&&&\cite{lu2018optimal}\\
multi-layer time-space network&&&&&\\
V2G on power distribution network operators:   &\checkmark&\checkmark&&&\cite{nourinejad2016equilibrium}\\
\epc bi-level models \& method of successive averages&&&&&\\
Continuous charging with equally-distant stations:  &\checkmark&&\checkmark&&\cite{sweda2017optimal}\\
\epc dynamic model \& forward recursion heuristics&&&&&\\
Optimal routing \& recharging policy;  &&\checkmark&\checkmark&&\cite{sweda2017adaptive}\\
\epc stochastic adaptive charging stations&&&&&\\
Recharging station \& routing with time-windows:  &&&\checkmark&&\cite{schneider2014electric}\\
\epc neighborhood search \& tabu search&&&&&\\
Scheduling of electric buses with driving range   &&\checkmark&\checkmark&&\cite{tang2019robust}\\
\epc constraints \& charging station capacity&&&&&\\
Implementation of DP approaches with  &&&\checkmark&&\citep{godfrey2002adaptive}, \citep{qian2017time}\\
\epc discrete state and action spaces&&&&&\\
Shooting heuristic (SH) to solve  &&&\checkmark&&\citep{newell1993simplified}, \citep{newell2002simplified}, \citep{betts2010practical},  \cite{murat2016global}, \cite{zhou2017parsimonious} \\
\epc boundary value problems in DPs&&&&&\\
CAV trajectory optimization: DP \& SH &&&\checkmark&&\cite{guo2019joint}\\
Inter-relationships among users with GNE &&&&\checkmark&\citep{suwansirikul1987equilibrium}, \citep{facchinei2007generalized}, \citep{meunier2010equilibrium}, \cite{wright2010dynamic}, \cite{corman2020interactions}\\
GNE \& quasi-variational inequalities &&&&\checkmark&\citep{bensoussan1974points}, \cite{zhou2005generalized}\\
\bottomrule
\hline
\end{tabular*}
\end{table*}
\end{center}

\vspace{-1mm}
\section{Model Formulation}\label{sec:modelformulation}
\noindent This section proposes a dynamic scheduling plan for EV charging facility utilization that assigns EV users to charging spots based on their arrival time and subsequent travel plans. Currently, charging facilities are able to show the real-time occupancies and offer reservations to users using a website or smartphone app to ensure that a spot is available when a user arrives. The objective is to minimize total user costs including (i) travel time from origins to charging facilities and beyond to final destinations, (ii) waiting time at charging facilities, and (iii) charging costs. Since the charging rate generally slows down as SOC reaches battery capacity, users are discouraged from staying for long hours for a full charge. In fact, EV users are encouraged to leave chargers with enough SOC according to their travel plans so that other users can utilize the chargers. 
The proposed formulation incorporates EV user reactions toward charging price and waiting time at facilities into the dynamic scheduling framework based on users' subsequent travel plans.

Table \ref{table:definitions} summarizes all the notations used in this paper. 
We first introduce the physical and temporal elements of the problem.
Let $T$ be the number of discrete time periods in the planning horizon and $\Gamma=\left\{0,1,\cdots,T-1\right\}$ denote the times at which users make charging decisions. We let $J$ represent the set of physical parking spaces, where each lot $j\in J$ offers a set of charging spots of type $k\in K= \{0,1\}$ over time. A charger type $k=0$ represents a slow charger, while $k=1$ indicates a fast charger. We define $c_{jk}$ to represent the capacity of each charger of type $k\in K$ at parking lot $j \in J$.
Let $\widehat{\mathcal{J}}^t_{jk}$ denote the number of charging spots with type $k$ charger at $j\in J$ that first become available at time $t\in \Gamma$. Accordingly, $\widehat{\mathcal{J}}^t= \bigcup_{j \in J, k \in K}\widehat{\mathcal{J}}^t_{jk}$ represents the spatial distribution of all newly recognized charging spots at time $t$. 
Additionally, $\mathcal{J}^t_{jk}$ denotes the number of charging spots with type $k$ charger already available in parking lot $j$ at time $t$ before any new spot availability. Similarly, $\mathcal{J}^t=\bigcup_{j \in J, k \in K}{\mathcal{J}}^t_{jk}$ represents the total number of charging spots that are already available at time $t$. Hence, $\mathcal{J}^{t^+}$ denotes the total number of available charging spots at time period $t$, where $\mathcal{J}^{t^+}=\mathcal{J}^t+\widehat{\mathcal{J}}^t$ that includes all existing as well as newly recognized charging spots.
We introduce $p_{jk}^{t}$ to represent the charging price of a spot with type $k$ charger in lot $j$ at time $t$.
Accordingly, $\mathcal{P}^t$ represents the set of charging prices at time $t$.

To monitor the EV user activities, we define $\widehat{\mathcal{D}}^t$ for each $t\in \Gamma$ to denote the set of users that first arrive at parking lots (i.e., cruise for available charging spots) in time period $t$. Accordingly, $\mathcal{D}^t$ denotes the existing users at time $t$ before the new EV arrivals, i.e., $\widehat{\mathcal{D}}^t$, are included in the system. 
Similarly, $\mathcal{D}^{t^+}$ indicates the set of available EV users at time $t$, including the new users that just arrived; i.e., $\mathcal{D}^{t^+}=\mathcal{D}^t \cup \widehat{\mathcal{D}}^t$. %
Let $\widehat{\mathcal{B}}^t$ represents the set of SOCs of EV users $i\in \mathcal{D}^t$ that first arrive at time period $t$ who cruise to find an available charging spot given the occupancy $\sigma^t_{jk}$ of charger type $k \in K$ located in their preferred parking lot at $j$. 
Accordingly, $\mathcal{B}^{t}$ denotes the SOC of available users at time $t$. Additionally, $b_i^t$ denotes the SOC of EV user $i$ at time $t$.
We also define $\Psi^{t}$ to illustrate the set of parking durations (i.e., $\psi_i$) of EV users $i\in \mathcal{D}^t$ at time $t$. 
Furthermore, $\widehat{\mathcal{L}}^t_{jk}$ denotes the expected average waiting time of EV users targeting lot $j$ with type $k$ charger at time $t$. Similarly, $\widehat{\mathcal{L}}^{t}=\bigcup_{j \in J, k\in K}\,\widehat{\mathcal{L}}^t_{jk}$ represents the set of newly perceived waiting times at time $t$.
%

%
%
The state of the system $S^t$ at time $t$ is captured by the spatial distribution of available charging spots and accompanying charging prices, distribution of charging demand along with their SOC and parking duration, and expected waiting time to get served; i.e., $S^t =\left\{\mathcal{J}^t,\mathcal{P}^t,\mathcal{D}^t,\mathcal{B}^t,\Psi^{t},\mathcal{L}^t\right\}$.

We define the decision variables as follows. At each time period $t$, EV user $i \in \mathcal{D}^t$ makes a decision $y_{ijk}$ on the location and type of charger they plan to target. In other words, $y_{ijk}=1$ if user $i$ charges with type $k$ charger in lot $j$, or 0 otherwise.
Additionally, $n_{ijk}$ represents the decision variable on maximum allowed charging duration to be determined at time period $t$ based on the users' current SOC and subsequent travel plan. Thus, $n_{ijk}$ defines the charging duration of user $i$ with type $k$ charger in a charging spot at parking lot $j$.

We let $\mathcal{O}$ and $\Delta$ respectively define the set of EV travel origins and destinations. EV user $i \in \mathcal{D}^t$ from origin $o \in \mathcal{O}$ may arrive at a charging spot located in parking lot $j$ at time $t \in \Gamma$ and take another trip to destination $\delta\in \Delta$. Hence, the charging demand is defined by EVs' travel origin and destination, given their arrival time.
EV users experience a driving cost $v_{oj}$ from origin $o$ to charging spot located in parking lot $j$ as well as a driving cost $\mu_{j\delta}$ to final destination $\delta\in \Delta$ to complete their trip remainder. Note that $\mu_{j\delta}$ can also be the driving cost of going back to travel origin (e.g., home). Besides, $\mu_{j\delta}$ can be zero if lot $j$ is close enough to the final destination.
The impact of charging facility occupancy on waiting time is captured as follows \citep{horni2013agent}.
\vspace{-1mm}
\begin{align}
&\mathcal{L}^t_{jk}={e_{jk}\,\beta_{jk}}(1-c_{jk}^{-1}(\sigma^t_{jk}+\sum_{i \in \mathcal{D}^t}y_{ijk}))^{-1},\nonumber\\
&\epc\epc\epc \forall j\in J,\,k \in K,\, t\in \Gamma,\, \label{eq:waiting}
\end{align}
%
%
\noindent where $e_{jk}$ represents the time spent to find an available charging spot of type $k$ at lot $j$. Parameter $\beta_{jk}$ is a constant that represents the reaction of drivers to shared information on charging facility occupancies, where $\beta_{jk}=1$ when drivers are fully aware of the updated occupancies, or 0 otherwise.
The proposed mathematical model optimizes total user costs at time $t$, as
\vspace{-1mm}
\begin{subequations}
    \begin{align}
        &\displaystyle{\operatornamewithlimits{\mbox{min}}_{\boldsymbol{y},\boldsymbol{n}}} \,\,\phi^{t}_{i}(\boldsymbol{y},\boldsymbol{n})=\sum_{k \in K} \sum_{j \in J} \big( \theta (v_{oj} + \mu_{j\delta}) + \theta' \widehat{\mathcal{L}}^t_{jk} \big)y_{ijk} \nonumber\\
        &+ \alpha p_{jk}^{t} n_{ijk} + \alpha'(\psi_i - n_{ijk}) 
        \label{eqref:ObjFunMain}\\[2pt]
		& \mbox{subject to} \nonumber\\
		& \sum_{k \in K} \sum_{j \in J} y_{ijk} \le 1, \forall i \in \mathcal{D}^t,\label{eqref:cons1Main}\\[2pt]
		& n_{ijk} \le My_{ijk},\,\forall i \in \mathcal{D}^t,\, j \in J,\, k \in K,\,\label{eqref:cons2Main}\\[2pt]
		& b_{i}^{t+1} \ge b_{i}^{t} + \pi(k+1) y_{ijk},\forall i \in \mathcal{D}^t, j \in J, k \in K,\label{eqref:cons3Main}\\[2pt]
		& b_{i}^{t} + \sum_{k \in K} \pi(k+1) n_{ijk} \ge Q_{i},\,\forall  i \in \mathcal{D}^t,\,j \in J,\,\label{eqref:cons4Main}\\[2pt]
		& y_{ijk} + \sum_{-i \in \mathcal{D}^t} y_{-i,jk} \le c_{jk} - \sigma_{jk}^{t},\,\forall j \in J, k \in K.\,\label{eqref:cons5Main}
    \end{align}
\end{subequations}
%
%
Objective function $\phi^{t}_{i}(\boldsymbol{y},\boldsymbol{n})$ in \eqref{eqref:ObjFunMain} aims to minimize the total costs imposed to user $i \in \mathcal{D}^t$ at time $t\in \Gamma$ to travel from origins to destinations with charging attempts en-route. Given user $i$ selects a charger of type $k$ at lot $j$, the first term defines (i) driving cost $v_{oj}$ to parking lot $j$ as well as $\mu_{j\delta}$ from charging spot at lot $j$ to destination $\delta$, where coefficient $\theta$ converts travel times to monetary values and (ii) waiting cost for chargers to become available, where $\theta'$ converts the expected average waiting time to monetary value. The second term indicates charging expense obtained by price $p^t_{jk}$ for the duration $n_{ijk}$ of parking, where $\alpha$ is a positive coefficient that weighs the impact of the charging expense in total costs imposed to EV users. 
Finally, the last term defines a penalty for staying longer than charging demand for the subsequent travel (i.e., overcharging penalty), where 
$\alpha'$ represents the overcharging penalty factor, and $\psi_i$ is the parking duration of user $i \in \mathcal{D}^t$.
Note that if user complies with the maximum allowed charging duration $n_{ijk}$ (i.e., $\psi_i = n_{ijk}$) there will be no penalty. We observe an additional cost if $\psi_i > n_{ijk}$ when user $i$ stays longer than their charging demand.
%
Constraints \eqref{eqref:cons1Main} ensure that each EV user can only choose one charger at $t$. Constraints \eqref{eqref:cons2Main} state that the charging opportunity is only available at parking lots selected by users. Constraints \eqref{eqref:cons3Main} update the SOC, i.e., $b_i^t$, of user $i \in \mathcal{D}^t$ at time $t$, where $\pi$ represents the charging rate in $Kw$ per time period.
Constraints \eqref{eqref:cons4Main} ensure that the SOC of user $i$ exceeds a certain threshold $Q_{i}$ before leaving the charging facility, given the subsequent travel plan of user $i$. And, constraints \eqref{eqref:cons5Main} ensure the capacity of facility with charger types $k$ at lot $j$ is not violated.

\begin{table}[H] 
 	\caption{Sets, decision/state variables, and parameters.}
 	\begin{center}
\vspace{-5mm} 			\scriptsize
 	\begin{tabular}{@{}ll}
 	\hline\hline
 	Sets\\
 			\hline
 			$\Gamma$
 			& Set of all time steps ${0, 1, \dots, T-1}$\\
 			$J$ & Set of parking spaces in a neighborhood \\
 			$K$ & Set of charger types \\
 			$\mathcal{O}$ & Set of EV travel origins\\
 			$\Delta$ & Set of EV travel destinations\\
			
 			Decision variables\\
 			\hline
 			$y_{ijk}$ & Binary; 1 if user $i$ uses type $k$ charger at lot $j$ \\ 
 			$n_{ijk}$ & Charging duration of user $i$ at $j$ with type $k$ charger\\
 			$a^t$ & Actions at each time $t$\\
			
 			States \& $t$-variants\\
 			\hline
 			$\widehat{\mathcal{J}}^t_{jk}$ & \# spots with type $k$ charger at $j$ first available at $t$\\
 			$\widehat{\mathcal{J}}^t$ & Spatial distribution of all newly realized spots at $t$\\
 			$\mathcal{J}^t_{jk}$ & \# spots with type $k$ charger at $j$ already available at $t$\\
 			$\mathcal{J}^t$ & Total \# charging spots already available at $t$\\
 			$\mathcal{J}^{t^+}$ & Total \# available charging spots at time $t$\\
 			$\mathcal{P}^t$ & Set of charging prices at time $t$\\
 			$p_{jk}^{t}$ & Charging price of a spot at $j$ with type $k$ charger at $t$\\
 			$\widehat{\mathcal{D}}^t$ & Set of users that first arrive at parking lots in time $t$\\
 			$\mathcal{D}^t$ & Set of existing users at $t$ before new EV arrivals\\
 			$\mathcal{D}^{t^+}$ & Set of available EV users at $t$, including new users\\
 			$\widehat{\mathcal{B}}^{t}$ &  Set of SOCs of EV users that first arrive at time $t$\\
 			$\mathcal{B}^{t}$ &  Set of SOCs of available EV users at time $t$\\
 			$b_{i}^{t}$ & SOC of EV user $i$ at time $t$\\
 			$\sigma_{jk}^{t}$ & Occupancy of type $k$ charger located in lot $j$ at $t$\\
			
 			$\Psi^{t}$ &  Set of parking durations of EV users at time $t$\\
 			$\widehat{\mathcal{L}}^t_{jk}$ & Expected avg waiting at $j$ with type $k$ charger at $t$\\
 			$\widehat{\mathcal{L}}^{t}$ &  Set of newly perceived waiting times at time $t$\\
 			$\mathcal{W}^t$ & Stochastic process with realization $\mathcal{W}^t(\omega)=\omega^t$\\ 
 			$\mathcal{S}^t$ & State of the system at each time $t$\\
 			$\Tilde{V}^{t}$ & Approximated value function\\
 			$\widetilde{\mathcal{S}}^{t,\,t'}$ & System state at time $t$ and $t'$ in look-ahead model\\
 			$\widetilde{\mathcal{A}}^{t,\,t'}$ & Set of actions at time $t$ and $t'$ in look-ahead model\\
 			$\widetilde{\Omega}^{t,\,t'}$ & Set of outcomes at time $t$ and $t'$ in look-ahead\\
 			$\mathcal{N}(\widetilde{S}^{t,\,t'})$ & \# visiting states $\widetilde{S}^{t,\,t'}$ in the tree search\\
 			${\mathcal{N}(\widetilde{S}^{t,\,t'},\,\widetilde{a}^{t,\,t'})}$ &
 			\# times a decision $\widetilde{a}^{t,\,t'}$ occurs when state is $\widetilde{S}^{t,\,t'}$\\
 			$\mathcal{C}_{b^{t,t'}_i}$ & Quadratic cone of $b^{t,t'}_i$\\
 			Parameters\\
 			\hline
 			$T$ & \# discrete time periods in the planning horizon\\
 			$v_{oj}$ & Driving cost from origin $o$ to charging spot at lot $j$\\
 			$\mu_{j\delta}$ & Driving cost from a spot at lot $j$ to destination $\delta$\\
 			$e_{jk}$ & Avg time of finding an available spot of type $k$ at $j$\\
 			$\beta_{jk}$ & Reaction to shared occupancy info; 1 for aware users\\ 
 			$c_{jk}$ & Capacity of charger type $k \in K$ at lot $j \in J$\\
 			$\theta$ & Coefficient to convert travel time to monetary value\\
 			$\theta'$ & Coefficient to convert waiting time to monetary value\\
 			$\alpha$ & Coefficient to weigh the impact of charging expense\\
 			$\alpha'$ & Overcharging penalty factor\\
 			$\pi$ & Charging rate in $Kw$ per time period\\
 			$M$ & A sufficiently large number\\
 			$Q_i$ & SOC threshold of user $i$ before leaving the facility\\
 			$\rho_z$ & Positive increasing parameter at iteration $z$\\
 		    $u_{jk}^{i,z}$ & Lagrangian multiplier of user $i$ for type $k$ charger at\\
 		    & lot $j$ at iteration $z$\\
 		    $N$ & \# iterations $n'$ of the MCTS algorithm\\
 			$H$ & Limited time horizon; threshold for tree expansion\\
 			$t'$ & Iteration in look-ahead; $t'=t,\,\dots,\,t+H-1$\\
 			$t''$ & Max allowed SOC update; $t'' \in [t', \mbox{max}\,\, n_{ijk})$\\
 			$\kappa$ & Max number of actions in the set of possible actions\\
 			$\tau$ & Iteration counter; update SOC before session ends\\
 			$\xi$ & \# iterations in the SH algorithm\\
 			$\gamma$ & Iteration counter in the SH algorithm\\
 			$\widetilde{\omega}^\gamma$ & Sample charging rate at iteration $\gamma$\\
 			$\iota$ & Balancing par. of exploration	\& exploitation in UCT\\
 			$\nu^{(1)}$, $\boldsymbol{\nu^{(2)}}$, & Lagrangian multipliers of constraints \eqref{eqref:cons1Main}-\eqref{eqref:cons5Main}\\
 			$\boldsymbol{\nu^{(3)}}$,$\boldsymbol{\nu^{(4)}}$, $\boldsymbol{\eta}$ &  \\
 			\hline
 		\end{tabular}
 		\label{table:definitions}
 	\end{center}
 	\end{table}

\vspace{-1mm}
\section{Solution Technique}\label{sec:Algorithm}
This section presents a hybrid solution technique that incorporates a (i) generalized Nash equilibrium, (ii) Monte Carlo tree search, and (iii) shooting heuristic into a stochastic look-ahead technique to find the optimal charging duration and spot assignment for each EV user over the planning horizon, given their subsequent travel plans and reactions toward charging prices and waiting time to get served. 
GNE distributes the proposed optimization problem \eqref{eqref:ObjFunMain}-\eqref{eqref:cons5Main} to EV user-level models given the strategy set of other EV users as exogenous information. Therefore, each EV user solves a lower dimension optimization problem more efficiently. Then, a MCTS algorithm with an embedded tree policy is applied to expand the tree search solely on promising branches. Moreover, a shooting heuristic is implemented to estimate the value of recently added tree nodes and determine the optimal charging schedules. 

\vspace{-5mm}
\subsection{Generalized Nash Equilibrium}\label{subsec:GNE}
\noindent The proposed problem in \eqref{eqref:ObjFunMain}-\eqref{eqref:cons5Main} suffers from a huge state space $S^t$, particularly when the number of EV users searching for available charging spots increases. 
%
%
Each EV user seeks to minimize its costs in finding an available charging spot, while the charger availability continuously changes due to the actions (a.k.a. strategies) of other users in selecting charging location and duration. The interaction among EV users (i.e., system players) represents a non-cooperative game-theoretic model in which players aim to maximize their own benefits individually without setting any agreement on optimal solutions.
A Nash equilibrium is defined as a condition if no player can maximize their reward by unilaterally changing their strategy. 
According to the literature, this problem can be formulated as a finite-dimensional variational inequality (VI) \citep{facchinei2007finite,harker1990finite}.
In particular, a GNE defines the proposed non-cooperative game as quasi-variational inequalities (QVIs) to address state-dependent strategy sets of other players \citep{facchinei2007generalized}.
The GNE distributes the proposed problem into univariate EV user level optimization models. Each user solves a convex optimization given the strategy set of other users as exogenous information. Thus, the problem can be solved very efficiently.

The strategy set of system players is subject to dynamic changes; e.g., un-occupied charging locations vary over time.
We apply a sequential penalty VI approach for QVIs, proposed by \cite{pang2005quasi}, to address the complexities involved in the dynamic strategy sets. We relax the general constraints \eqref{eqref:cons5Main} that include the decision variables of all users and incorporate them as a penalty term into the objective function \eqref{eqref:ObjFunMain} and then, solve a sequence of penalized VIs, as follows.
We let $\rho_z$ be a positive increasing parameter that satisfies $\rho_z < \rho_{z+1}$ and $u_{jk}^{i,z}$ as a weight vector for relaxed constraints $\boldsymbol{g}(\boldsymbol{y})$ for each user $i$ at lot $j$ with type $k$ charger, where $z$ is the iteration number of GNE. Then, problem \eqref{eqref:ObjFunMain}-\eqref{eqref:cons5Main} can be reformulated for each user $i$, given fixed $(\boldsymbol{y},\boldsymbol{n})$ for other users, as
%
%
	\begin{align}
		&\displaystyle{\operatornamewithlimits{\mbox{min}}_{\boldsymbol{y},\boldsymbol{n}}} \,\,\varphi^{t}_{i}(\boldsymbol{y},\boldsymbol{n})=\nonumber\\
        &\sum_{k \in K} \sum_{j \in J} \big( \theta (v_{oj} + \mu_{j\delta}) + \theta' \widehat{\mathcal{L}}^t_{jk} \big)y_{ijk} + \alpha\, p_{jk}^{t}\, n_{ijk} \nonumber\\
        &\epc\epc + \alpha'(\psi_{i} - n_{ijk}) \nonumber\\
        &\epc\epc + \rho_z^{-1} u_{jk}^{i,z}\, \exp({\rho_z g_{jk}^{i}(y_{-i,jk}, y_{i,jk})}), \label{eqref:ObjFun2}\\[2pt]
		& \mbox{subject to}\,\,\, \eqref{eqref:cons1Main}-\eqref{eqref:cons4Main}. \nonumber
	\end{align}
%
%
The objective function $\varphi^t_i$ aims to minimize the total costs (i.e., driving time to a charging spot, waiting time to get an available charger, and charging expense) of each user at each $t$, given an initial state $S^0$; i.e., $\operatornamewithlimits{\mbox{minimize}}_{\boldsymbol{y},\boldsymbol{n}} \varphi^{0}_{i}(\boldsymbol{y},\boldsymbol{n}) + \mathbb{E} \{\sum_{t \in \Gamma \setminus \{0\}} \operatornamewithlimits{\mbox{minimize}}_{\boldsymbol{y},\boldsymbol{n}}\,\, \varphi^{t}_{i}(\boldsymbol{y},\boldsymbol{n})  \}$.
%
%

Now, we propose a consensus-based coordination scheme to push the user-level solutions toward system-level optimality; i.e., to find near-optimal solutions. To this end, a penalty term $\boldsymbol{u} + \boldsymbol{\rho}\,\boldsymbol{g}(\boldsymbol{y})$ with associated Lagrangian multipliers $(\boldsymbol{\rho},\,\boldsymbol{u})$ is defined that represents the violation of constraints \eqref{eqref:cons5Main} based on other EV users' selected strategies.
The proposed approach updates the strategy set of users and estimates the Lagrangian multiplier $\boldsymbol{u}$ as follows. 
%
%
\begin{align}
    u_{jk}^{i,z+1} \equiv \text{max}\Bigl(0,\, u_{jk}^{i,z} + \rho_{z}\,g_{jk}^{i}(y_{-i,jk})\Bigr).\label{eqref:updateU}
\end{align}
EV users form a consensus on the location and duration of charging attempts by exchanging information on available spots over time.

\vspace{-4mm}
\subsection{Monte Carlo Tree Search}\label{subsec:MCTS}
\noindent This section applies a DP technique to obtain the minimum total cost for each user in an equilibrium condition. While the GNE procedure described in Section \ref{subsec:GNE} distributes the problem into user-level programs, the newly constructed problems defined for each user still experience a huge state space due to unknown uncertainties (i.e., stochastic charging rate) that occur over the planning horizon. The main source of complexity arises in the value function estimation, where we shall estimate the value of being in a particular state. Literature shows various techniques to tackle the intractability; for instance, \cite{godfrey2001adaptive,mirheli2018development,mirheli2020utilization,SnowPlow-2016}
exclude a state variable from the set of state variables and approximate the value function with the remaining ones. There are also studies that consider a limited number of time periods to simplify the look-ahead model in the dynamic program \cite[e.g.,][]{mirheli2019consensus,al2016information}. 
In this paper, the set of current waiting time $\mathcal{L}^t$ in the state space is obtained by equation \eqref{eq:waiting} using the occupancy $\sigma^t_{jk}$ at each parking lot $j$ for available users $i\in \mathcal{D}^t$. 
Then, the approximated value function $\Tilde{V}^{t+1}(\mathcal{B}^{t+1},\mathcal{J}^{t+1})$ only captures the impacts of available charging spots and SOC of each EV. The decision at each time $t$ is made as
%
\begin{align}
    &\mathcal{A^*}^t(S^t) = \displaystyle{\operatornamewithlimits{\mbox{argmin}}_{\boldsymbol{y},\boldsymbol{n} \in \mathcal{A}^t(S^t)}} (\varphi^{t}_{i}(\mathcal{B}^t,\mathcal{J}^t,\boldsymbol{y},\boldsymbol{n}) +\nonumber\\
    & \epc\epc\epc\epc\epc \Tilde{V}^{t+1}(\mathcal{B}^{t+1},\mathcal{J}^{t+1})) \label{eqref:appBellman}\\
    & \mbox{subject to}\,\,\, \eqref{eqref:cons1Main}-\eqref{eqref:cons4Main}.\nonumber
\end{align}

For notation simplicity, we let $a^t$ represent all actions (i.e., selected charging spots, selected charger types, and the length of each charging session) at each time $t\in\Gamma$. 
%
%
To ensure that equation \eqref{eqref:appBellman} is not unbounded, Theorem \ref{theorem1} denotes that the value function $\Tilde{V}^{t}_{a}(S^{t}_{a})$ is confined by a lower bound (i.e., the objective value of each user will not converge to $- \infty$), and actions $(\boldsymbol{y},\boldsymbol{n})$ are attainable at each time $t$. 

\begin{theorem}\label{theorem1}
Given the initial state $\mathbf{S}^0$, 
a lower bound for value function $\Tilde{V}^{t}_{a}(S^{t}_{a})$ will be defined as
    $
    \mathbb{E}[\operatornamewithlimits{min}_{\boldsymbol{a}} \varphi^{t}_{i}(\mathcal{B}^t,\mathcal{J}^t,\boldsymbol{a})],\, \forall t \in \Gamma$. 
\vspace{-5pt}
\end{theorem}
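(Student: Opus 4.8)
The plan is to exploit the additively separable, non-negative structure of the stage cost $\varphi^{t}_{i}$ together with the dynamic-programming recursion implied by \eqref{eqref:appBellman}, and then to isolate the current-period term. Reading \eqref{eqref:appBellman} as a Bellman recursion, the value at stage $t$ is the optimal expected cost-to-go,
\[
\Tilde{V}^{t}_{a}(S^{t}_{a}) = \mathbb{E}\Bigl[\,\sum_{s=t}^{T-1}\varphi^{s}_{i}(\mathcal{B}^{s},\mathcal{J}^{s},\boldsymbol{a}^{s,*})\,\Big|\,S^{t}_{a}\Bigr],
\]
where the expectation is over the stochastic charging-rate process $\mathcal{W}^{t}$ and $\boldsymbol{a}^{s,*}\in\mathcal{A}^{s}(S^{s})$ denotes the minimizer selected at each stage. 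Given this representation, the whole argument reduces to (i) showing that every summand is bounded below and (ii) discarding the non-negative tail indexed by $s>t$. I would carry this out by backward induction from $s=T-1$, the base case being immediate once non-negativity is in hand.

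The key preliminary step is to verify term by term that $\varphi^{t}_{i}(\mathcal{B}^{t},\mathcal{J}^{t},\boldsymbol{a})\ge 0$ on the feasible set $\mathcal{A}^{t}(S^{t})$ defined by \eqref{eqref:cons1Main}--\eqref{eqref:cons4Main}. The travel-and-waiting contribution $\bigl(\theta(v_{oj}+\mu_{j\delta})+\theta'\widehat{\mathcal{L}}^{t}_{jk}\bigr)y_{ijk}$ is a product of non-negative cost coefficients, a non-negative waiting time from \eqref{eq:waiting}, and $y_{ijk}\in\{0,1\}$; the charging expense $\alpha\,p^{t}_{jk}\,n_{ijk}$ is non-negative since $\alpha,p^{t}_{jk}\ge 0$ and $n_{ijk}\ge 0$; the overcharging term $\alpha'(\psi_{i}-n_{ijk})$ is non-negative because a user never schedules more charging than its parking duration, so $\psi_{i}\ge n_{ijk}$; and the consensus penalty $\rho_{z}^{-1}u^{i,z}_{jk}\exp(\rho_{z}g^{i}_{jk})$ is non-negative because $\rho_{z}>0$, the exponential is strictly positive, and $u^{i,z}_{jk}\ge 0$ is preserved by the projection $\max(0,\cdot)$ in \eqref{eqref:updateU}. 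Hence $\varphi^{s}_{i}\ge 0$ at every stage.

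With non-negativity established, the bound follows directly: dropping the tail $\sum_{s>t}\varphi^{s}_{i}\ge 0$ leaves $\Tilde{V}^{t}_{a}(S^{t}_{a})\ge\mathbb{E}\bigl[\varphi^{t}_{i}(\mathcal{B}^{t},\mathcal{J}^{t},\boldsymbol{a}^{t,*})\bigr]$, and because $\boldsymbol{a}^{t,*}$ is feasible we have the pointwise inequality $\varphi^{t}_{i}(\mathcal{B}^{t},\mathcal{J}^{t},\boldsymbol{a}^{t,*})\ge\min_{\boldsymbol{a}}\varphi^{t}_{i}(\mathcal{B}^{t},\mathcal{J}^{t},\boldsymbol{a})$. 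Taking expectations yields $\Tilde{V}^{t}_{a}(S^{t}_{a})\ge\mathbb{E}\bigl[\min_{\boldsymbol{a}}\varphi^{t}_{i}(\mathcal{B}^{t},\mathcal{J}^{t},\boldsymbol{a})\bigr]$, and running the same chain backward over $t=T-1,\dots,0$ delivers the claim for all $t\in\Gamma$.

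I expect the main obstacle to be the attainability half of the statement, namely that $\mathcal{A}^{t}(S^{t})\neq\emptyset$ so that the inner minimum is taken over a non-empty set and the value is not vacuously $-\infty$. This is delicate because \eqref{eqref:cons4Main} forces the post-charging SOC past the threshold $Q_{i}$ while \eqref{eqref:cons2Main} ties the duration $n_{ijk}$ to the binary assignment through the big-$M$ link. I would settle it by exhibiting an explicit feasible action for each reachable state: select any $(j,k)$ with residual capacity in \eqref{eqref:cons5Main} and set $n_{ijk}$ to the least duration with $b^{t}_{i}+\pi(k+1)n_{ijk}\ge Q_{i}$, which is well defined since $\pi>0$ and durations are capped by $M$ and by $\psi_{i}$. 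Because each $y_{ijk}$ ranges over a finite set and each $n_{ijk}$ is bounded, $\varphi^{t}_{i}$ is minimized over a non-empty, bounded feasible region, so the minimum is attained and finite; combined with the non-negativity above, this rules out $\Tilde{V}^{t}_{a}(S^{t}_{a})=-\infty$ and completes the argument.
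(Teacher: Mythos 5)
Your argument is correct, but it is a genuinely different and substantially more heavyweight route than the paper's. The paper's proof is a one-liner: it takes $\Tilde{V}^{t}_{a}(S^{t}_{a})$ to be, by definition, the single-stage expectation $\mathbb{E}\bigl[\varphi^{t}_{i}(\mathcal{B}^t,\mathcal{J}^t,\boldsymbol{a})\bigr]$ at the selected action and then applies the pointwise inequality $\varphi^{t}_{i}(\cdot,\boldsymbol{a})\ge \min_{\boldsymbol{a}}\varphi^{t}_{i}(\cdot,\boldsymbol{a})$ inside the expectation; no recursion, no non-negativity, no tail is ever invoked. You instead read $\Tilde{V}^{t}_{a}$ as the full optimal cost-to-go $\mathbb{E}\bigl[\sum_{s\ge t}\varphi^{s}_{i}\bigr]$, which forces you to prove term-by-term non-negativity of the stage cost so that the future sum can be discarded. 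What your version buys is robustness and completeness: it works under the standard DP reading of the post-decision value function rather than the paper's single-stage identification, and it is the only one of the two that actually addresses the attainability claim in the surrounding text (non-emptiness and boundedness of $\mathcal{A}^{t}(S^{t})$ via an explicit feasible $(j,k,n_{ijk})$), which the paper's proof silently skips. What it costs is a dependence on sign conditions the paper never states as formal assumptions --- $\theta,\theta',\alpha,\alpha',p^{t}_{jk}\ge 0$ and $\psi_i\ge n_{ijk}$ (the latter holding only on the feasible set, as you note via the infeasibility rule in the SH step) --- so your lower bound is conditional on those, whereas the paper's (definitional) argument is not. Both chains terminate at the same inequality, so the proposal stands as a valid, if stronger-premised, alternative proof.
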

\begin{proof}\nobreak\ignorespaces
See Appendix. 
\end{proof}

\noindent We now define a tree policy to evaluate the available actions and approximate the value function at each time $t$. Afterward, a shooting heuristic is defined to estimate the value of the recently added node in the tree search. These steps fulfill the MCTS procedure, as follows.

\subsubsection{Tree Policy}\label{subsubsec:treePolicy}

\noindent 
The huge state space in the proposed problem, caused by the combination of feasible actions and exogenous information at each time $t$, imposes an exponential growth rate to the tree that makes the computation of value function at each branch interactable. Therefore, we apply an MCTS algorithm \citep{al2016information} with a look-ahead policy to (i) efficiently estimate the value function and (ii) effectively capture the uncertainties.

The MCTS algorithm generally consists of four steps \citep{browne2012survey,munos2014bandits}: selection, expansion, simulation, and back-propagation. In this study, we first assign each user $i$ to every parking lot $j$ with an available charging spot to generate the first level of the tree. We solve \eqref{eqref:cons1Main}-\eqref{eqref:cons4Main}, and \eqref{eqref:appBellman} in the selection step to (i) select the best charging allocation and duration actions from the pool of possible actions and (ii) estimate their value functions over iterations until we reach an expandable state. In each leaf node, if a user is still not assigned to any spot, all parking lots that are not fully occupied will be added to the action set, where each is represented by a newly added branch in the tree. We assume that EV users do not switch between parking lots after the assignment, where one state is added to the tree with the same charging spot of the same type. Then, in the simulation step, the value of the added state is calculated using SH to observe the impact of uncertain charging rates on SOC through subsequent time periods (see Section \ref{subsubsec:SH}). Finally, the back-propagation step updates the value functions of predecessor states based on the estimated value of recently added states. 


We acquire the exogenous information on unexpected average waiting time at the beginning of each time period $t\in \Gamma$, where the stochastic information is added to the tree to capture the uncertainties. In the look-ahead model, all variables are indexed with $t,\,t'$ to identify the time iteration $t$ in the main model and $t'=t,\,\dots,\,t+H-1$ in the look-ahead model, where $H$ represents a limited time horizon as a threshold for tree expansion (i.e., inner tree iterations). 
We apply a two-stage look-ahead model to (i) update the estimation of value function $\widetilde V^{t,t'}(\widetilde{S}^{t,t'})$ with the selected actions and (ii) compute the post-decision $\widetilde V^{t,t'}_a(\widetilde{S}^{t,t'}_a)$ value functions that include the effects of adding exogenous information, where $\widetilde{S}^t=\{\mathcal{B}^t,\,\mathcal{J}^t,\,\mathcal{P}^t\}$.
%
Additionally, at state $\widetilde{S}^{t,t'}$, we let $\widetilde{\mathcal{A}}^{t,\,t'}(\widetilde{S}^{t,t'})$ denote the set of decisions, where $\widetilde{\mathcal{A}}^{t,\,t'}_e(\widetilde{S}^{t,t'})$ defines the set of decisions explored in the tree at time $t'$ and its complement set $\widetilde{\mathcal{A}}^{t,\,t'}_u(\widetilde{S}^{t,t'})$ represents the unexplored decisions. 
%
Once the action space (i.e., charging spot, type, duration) is set for time $t'$, a sample of possible outcomes $\widetilde{\omega} \in \widetilde{\Omega}^{t,\,t'+1}(\widetilde{S}^{t,t'}_a)$ will be generated and fed into \eqref{eq:waiting} to compute the expected average waiting time of available users in each lot $j$, which is unknown prior to time $t$. For the possible outcomes, we let $\widetilde{\Omega}^{t,\,t'+1}(\widetilde{S}^{t,t'}_a)$ represent all possible random events that can take place at time $t'+1$, where $\widetilde{\Omega}^{t,\,t'+1}_e(\widetilde{S}^{t,t'}_a)$ and $\widetilde{\Omega}^{t,\,t'+1}_u(\widetilde{S}^{t,t'}_a)$ denote the explored and unexplored possible outcomes, respectively.

The proposed stochastic MCTS framework has the computational budget of $N$ iterations (see more details in \citep{mirheli2020utilization}). At each time $t$, the current state $S^t$ is captured to generate a state $\widetilde{S}^{t,\,t'}$ as a root node of the tree, generate the MCTS algorithm, build a look-ahead model to estimate the value functions $\widetilde{V}^{t+1}$, and return the vector of near-optimal actions $\mathcal{A}^{*^t}$ at time period $t$.
%
%
In the selection step, there is a trade-off between exploiting the high-reward states and exploring the states frequently ignored during the search, until we reach a threshold of sufficient possible actions $\kappa$. Therefore, in the selection step, we follow upper confidence-bounding (UCT) for trees, defined in \cite{browne2012survey}, as follows
%
\begin{align}
   &\widetilde{a}^{*^{t,\,t'}}=\nonumber\\
   &\operatornamewithlimits{argmax}_{\widetilde{a}^{t,t'} \in \widetilde{\mathcal{A}}^{t,t'}_e (\widetilde{S}^{t,t'})}\Bigl(-(\widetilde{\varphi}^t(\widetilde{S}^{t,t'},\widetilde{a}^{t,t'})\,+\widetilde{V}^{t,t'}_a(\widetilde{S}^{t,t'}_a))+\nonumber\\
   &\iota\, \sqrt{\frac{\ln \mathcal{N}(\widetilde{S}^{t,t'})}{\mathcal{N}(\widetilde{S}^{t,t'},\,\widetilde{a}^{t,t'})}}\Bigr), \label{UCT}
\end{align}
%
%
where $\iota$ is an adjustable parameter to balance exploration and exploitation, $\mathcal{N}(\widetilde{S}^{t,\,t'})$ represents the number of visiting states $\widetilde{S}^{t,\,t'}$, and ${\mathcal{N}(\widetilde{S}^{t,\,t'},\,\widetilde{a}^{t,\,t'})}$ identifies the number of times a decision $\widetilde{a}^{t,\,t'}$ is taken from state $\widetilde{S}^{t,\,t'}$ during the tree search process. Once decisions $\widetilde{a}^{t,\,t'}$ are made, the state of the system will be updated, i.e., $\widetilde{S}^{t,\,t'}_a$, where we add a sample realization of exogenous information to reach the next pre-decision state $\widetilde{S}^{t,\,t'}=S^{\mathcal{T},a}(\widetilde{S}^{t,\,t'}_a,\,\widetilde{\mathcal{W}}^{t,\,t'+1})$.  
Here, $S^{\mathcal{T},a}$ represents the transition function between the evolution of each two consecutive state variables. 
In the simulation step, we develop an SH-based approach to obtain an initial estimation for the newly added tree nodes. In this step, we first generate a sample path $\widetilde{\omega} \in \widetilde{\Omega}^{t,\,t'}(\widetilde{S}^{t,\,t'})$ to determine the level of information provided for the users at time $t$. More details follow.
\subsubsection{Shooting Heuristic}\label{subsubsec:SH}
\noindent This section describes an SH to estimate the value of the recently added node in the tree search. After adding the node by allocating the charging facility or identifying the charging duration at each time period $t$, we generate a sample path of charging rates for each EV that will be followed in the subsequent time periods. Accordingly, the SOC of each EV is updated by the average value of sample charging rates. An infeasible solution is reported when the EV cannot receive the required charge by the time of leaving the facility (i.e., if $\psi_i < n_{ijk}$ is observed). We define $\xi$ as the number of iterations in SH. Figure \ref{fig:SH} depicts all attempts generated by sample charging rates over $\gamma = 1,\dots,\xi$ iterations.
\vspace{-10pt}
\begin{figure}[H]
	\begin{center}
	\includegraphics[height=1.8 in]{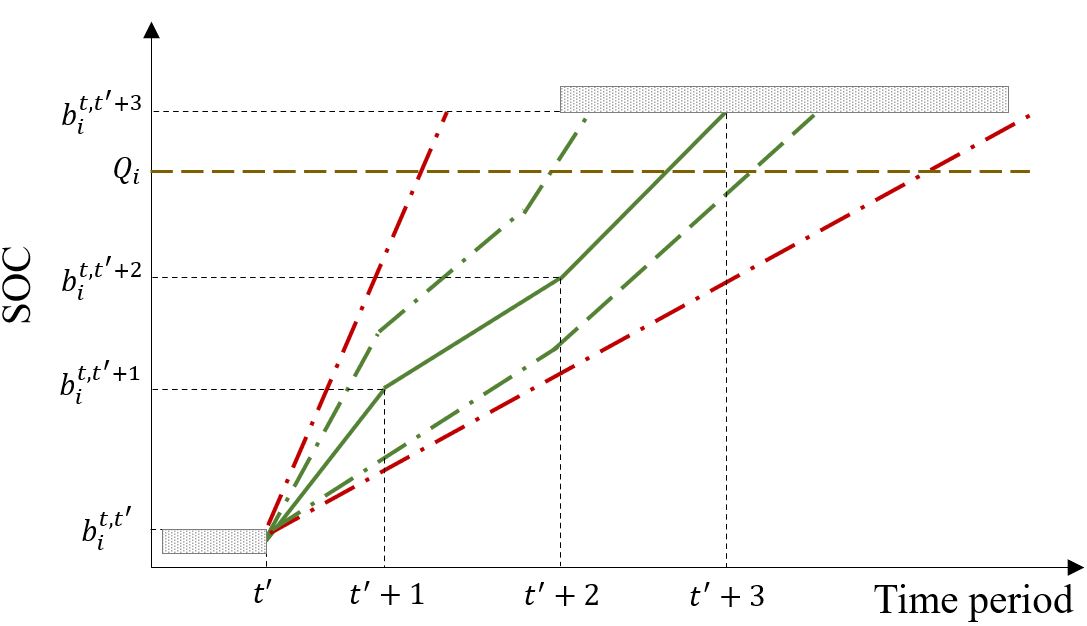}
	\vspace{-20pt}
		\caption{Shooting heuristics with stochastic charging rates.}
		\label{fig:SH}
	\end{center}
\end{figure}
%
Given $b^{t,t'}_i$ as the SOC of user $i \in \mathcal{D}^t$ at time $t$ and look-ahead time $t'$, the sample path $\widetilde{\omega}^\gamma$ denotes the charge amount offered by the charging facility for $(t, t')$ at iteration $\gamma$. The SOC of each EV (i.e., $b^{t,t'}_i$) is updated based on $\widetilde{\omega}^\gamma$ values until we reach the parking duration $\psi_i$ of user $i\in \mathcal{D}^t$ (i.e., users $i$ leaves the charging spot after $\psi_i$ duration). The proposed heuristic is described in Algorithm \ref{SH} as follows that is embedded in the MCTS framework. 
\begin{algorithm}[H]
 	\caption{The SH procedure.}\label{SH}
 	\small

 	\begin{algorithmic}[1]
 		\Statex \textbf{procedure} {SH(${b}^{t,t'}_i$)}{}
 		\Statex Collect $b^{t,t'}_i, n_{ijk}, Q_i $
 		\Statex Set $\xi$, $b^{t,t'}_{i,\gamma} \gets b^{t,t'}_i$, and $\tau = 1$
 		\Statex \textbf{while} $\gamma<\xi$
 		\Statex Generate $\widetilde{\omega}^\gamma$ for the required session length $n_{ijk}$
 		\Statex \hspace{3mm} \textbf{for} $\tau \le n_{ijk}$
 		\Statex \hspace{10mm} Update $b^{t,t'+\tau}_{i,\gamma} \gets b^{t,t'+\tau}_{i,\gamma} + \pi(k+1) y_{ijk}$
 		\Statex \hspace{3mm} \textbf{end} 
 		\Statex \hspace{3mm} \textbf{if} $b^{t,t'+n}_{i,\gamma}<Q_i$
 		\Statex \hspace{10mm} \textbf{remove} $b^{t,t'+n}_{i,\gamma}$ from the generated SOCs
 		\Statex \hspace{3mm} \textbf{end}
 		\Statex \textbf{end}
 		\Statex Average $b^{t,t'+\tau}_{i} \gets b^{t,t'+\tau}_{i,\gamma}$ for all $\tau$
 	\end{algorithmic}
\end{algorithm}

Figure \ref{fig:flowchart} shows the general framework for GNE with an embedded consensus-based coordination scheme and incorporated MCTS-SH to solve the optimization problem \eqref{eqref:cons1Main}-\eqref{eqref:cons4Main}, and \eqref{eqref:appBellman} under the charging rate uncertainties. 

\vspace{-10pt}
\begin{figure}[H]
	\begin{center}
	\includegraphics[height=3.1 in]{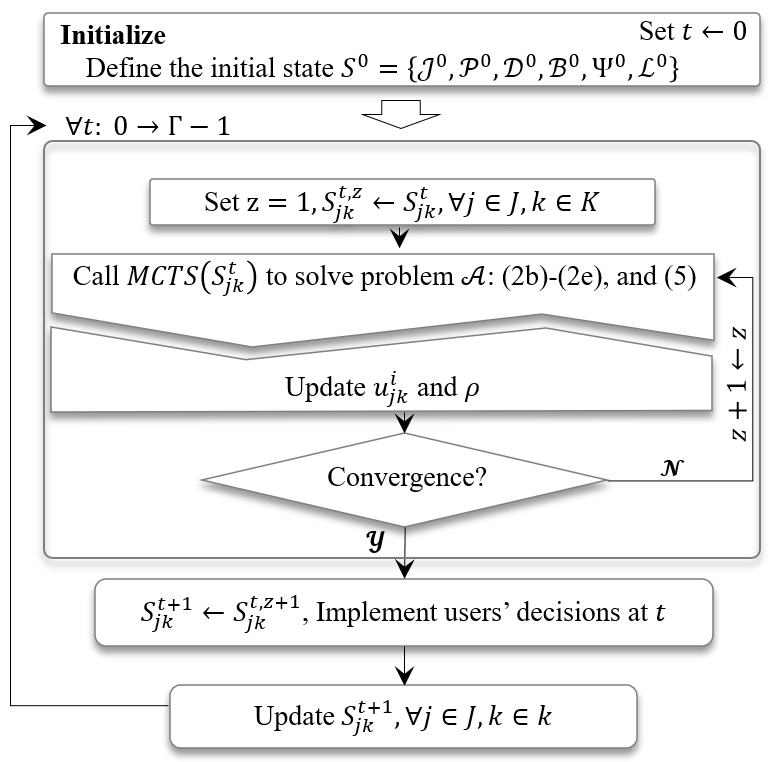}
		\vspace{-20pt}
		\caption{GNE-MCTS general framework.
		}
	\label{fig:flowchart}
	\end{center}
\end{figure}

Proposition \ref{prop1} denotes that SH develops a feasible set of SOC levels for an arbitrary path of charging rates, given feasible values of $b^{t,t'}_i$. Definition \ref{def1} supports Proposition \ref{prop1} by introducing the quadratic cone of SOC values.

\begin{definition}\label{def1}
 The quadratic cone of $b^{t,t'}_i$ defines the set of sample SOCs based on variant charging rates following the initial SOC as 
 $\mathcal{C}_{b^{t,t'}_i} = \{b^{t,t''}_i|  b^{t,t''-1}_i + \pi y_{ij,0}\, \le\, b^{t,t''}_i \le\, b^{t,t''-1}_i + 2 \pi y_{ij,1}\}$,
where $t'' \in [t', \mbox{max}\,\, n_{ijk})$ is generated from $\widetilde{\omega}^\gamma$.
\end{definition}

\begin{proposition}\label{prop1}
Given a feasible starting SOC value of $b^{t,t'}_i$, the quadratic cone $\mathcal{C}_{b^{t,t'}_i}$ is not empty if and only if $\psi_{i}^{-1}(Q_{i}-b^{t,t'}_i) \in [\pi, 2\pi]$.
\end{proposition}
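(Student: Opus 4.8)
The plan is to reduce the non-emptiness of the quadratic cone $\mathcal{C}_{b^{t,t'}_i}$ to a one-dimensional reachability condition by telescoping the per-step increment bounds of Definition~\ref{def1}. The key observation is that a point of $\mathcal{C}_{b^{t,t'}_i}$ is \emph{feasible} precisely when the associated SOC trajectory attains the departure requirement $b \ge Q_i$ of constraint~\eqref{eqref:cons4Main} by the end of the parking duration $\psi_i$, i.e. at look-ahead step $t'' = t' + \psi_i$. Reading Definition~\ref{def1} as bounding each one-period increment $b^{t,t''}_i - b^{t,t''-1}_i$ between the slow-charger rate $\pi$ and the fast-charger rate $2\pi$ — the two extremes of the stochastic sample rate $\widetilde{\omega}^\gamma$ — the whole argument reduces to a cumulative bound on the charge delivered over the $\psi_i$ parking periods.

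For the necessity direction ($\Rightarrow$), I would assume $\mathcal{C}_{b^{t,t'}_i}$ is non-empty and pick a feasible trajectory. Writing the total charge delivered as the telescoping sum $b^{t,t'+\psi_i}_i - b^{t,t'}_i = \sum_{\tau=1}^{\psi_i}\bigl(b^{t,t'+\tau}_i - b^{t,t'+\tau-1}_i\bigr)$ and bounding each summand by $[\pi,2\pi]$ yields $\psi_i\pi \le b^{t,t'+\psi_i}_i - b^{t,t'}_i \le 2\psi_i\pi$. Since a feasible SH trajectory meets the requirement $Q_i$ at the deadline but, under the over-charging penalty $\alpha'(\psi_i - n_{ijk})$, does not exceed it, the terminal SOC equals $Q_i$, so $Q_i - b^{t,t'}_i$ lies in $[\psi_i\pi,\,2\psi_i\pi]$; dividing by $\psi_i$ gives $\psi_i^{-1}(Q_i - b^{t,t'}_i) \in [\pi,2\pi]$.

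For the sufficiency direction ($\Leftarrow$), I would start from $\psi_i^{-1}(Q_i - b^{t,t'}_i) \in [\pi,2\pi]$ and explicitly construct a cone point. Setting every per-period increment equal to the constant average rate $\bar{\delta} = \psi_i^{-1}(Q_i - b^{t,t'}_i)$, which by hypothesis lies in $[\pi,2\pi]$, produces a trajectory satisfying the increment bounds of Definition~\ref{def1} at each step and terminating exactly at $Q_i$; hence this trajectory is a feasible element of $\mathcal{C}_{b^{t,t'}_i}$. More generally, because the sample rate varies continuously in $[\pi,2\pi]$, an intermediate-value argument shows that any target total in $[\psi_i\pi,\,2\psi_i\pi]$ is attainable, so the construction is not special to the uniform choice.

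The main obstacle I anticipate is not the arithmetic but pinning down the correct feasibility semantics of ``non-empty'': one must justify that the SH retains a cone point only if its trajectory meets $Q_i$ by the deadline (cf. the infeasibility test $\psi_i < n_{ijk}$), and explain why both endpoints of $[\pi,2\pi]$ are active — the upper bound $2\pi$ from the reachability of $Q_i$ even at the fastest rate, and the lower bound $\pi$ from the no-overcharge requirement that even the slowest trajectory not overshoot $Q_i$ before departure. Care is also needed to treat $\psi_i$ as an integer number of periods and to confirm that the continuous range of $\widetilde{\omega}^\gamma$ legitimizes the intermediate-value step in the sufficiency proof.
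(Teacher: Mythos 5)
Your overall strategy --- telescoping the per-period increment bounds of Definition~\ref{def1} across the parking duration and comparing the cumulative charge against $Q_i-b^{t,t'}_i$, with an explicit constant-rate trajectory for sufficiency --- is the same one the paper follows, and your sufficiency half is, if anything, spelled out more carefully than the paper's (which simply rewrites the hypothesis as $b^{t,t'-1}_i+\pi n_{ij,0}\le b^{t,t'}_i\le b^{t,t'-1}_i+2\pi n_{ij,1}$ and invokes $n_{ij,1}\ge 1$ together with $2\pi y_{ij,1}\le 2\pi n_{ij,1}$).

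The gap is in your necessity direction, specifically the lower bound $\psi_i^{-1}(Q_i-b^{t,t'}_i)\ge\pi$. You obtain it by asserting that a feasible trajectory's terminal SOC \emph{equals} $Q_i$, and you justify that equality by the overcharging penalty $\alpha'(\psi_i-n_{ijk})$. But that penalty is a soft term in the objective \eqref{eqref:ObjFunMain}, not a feasibility requirement; the only hard condition on the terminal SOC is constraint \eqref{eqref:cons4Main}, which is the one-sided inequality $b_i^t+\sum_{k\in K}\pi(k+1)n_{ijk}\ge Q_i$. A cone point whose trajectory overshoots $Q_i$ is therefore admissible, and for such a trajectory your telescoping bound yields only $Q_i-b^{t,t'}_i\le b^{t,t'+\psi_i}_i-b^{t,t'}_i\le 2\psi_i\pi$; the claimed lower bound on $Q_i-b^{t,t'}_i$ does not follow. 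The paper's proof does not rest on exact attainment: it instead sandwiches $Q_i$ between the slow-rate and fast-rate cumulative charges over the respective charging durations, $b^{t,t'}_i+\pi n_{ij,0}\le Q_i\le b^{t,t'}_i+2\pi n_{ij,1}$, using only the feasibility condition $\psi_i\ge n_{ijk}$, so the lower end of the interval is tied to $Q_i$ being \emph{reachable} at the slow rate within the stay rather than to a no-overshoot property. (The paper's own argument is admittedly terse and leaves the division by $\psi_i$ implicit, but it does not need the exact-equality claim your derivation hinges on.) To close your proof you would either have to build terminal equality with $Q_i$ into the definition of a feasible cone point, or rederive the lower bound from reachability as the paper does.
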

\begin{proof}
See Appendix. 
\end{proof}

\begin{lemma}
 Given any $\delta' \ge 0$ and a feasible starting SOC value of $b^{t,t'}_i$, if quadratic cone $\mathcal{C}_{b^{t,t'+\delta '}_i}$ is not empty, then $\mathcal{C}_{b^{t,t'}_i}$ is not empty.
\end{lemma}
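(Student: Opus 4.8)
The plan is to reduce both non-emptiness claims to the scalar feasibility condition furnished by Proposition~\ref{prop1} and then transport that condition backward from look-ahead time $t'+\delta'$ to $t'$. Concretely, I would first invoke Proposition~\ref{prop1} at the later instant: since $\mathcal{C}_{b^{t,t'+\delta'}_i}$ is non-empty, the average rate required to reach the target from $b^{t,t'+\delta'}_i$ over its remaining charging window lies in $[\pi,2\pi]$. The goal is then to show the same box condition holds for $b^{t,t'}_i$ over its (longer) remaining window, which by Proposition~\ref{prop1} is exactly non-emptiness of $\mathcal{C}_{b^{t,t'}_i}$.

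The engine of the argument is an additive decomposition of the gap to the target. Writing $Q_i - b^{t,t'}_i = \big(Q_i - b^{t,t'+\delta'}_i\big) + \big(b^{t,t'+\delta'}_i - b^{t,t'}_i\big)$, I would bound the second summand using the SOC-update rule of Algorithm~\ref{SH}: each charging period increments the state by $\pi(k+1)\in\{\pi,2\pi\}$, so over the $\delta'$ elapsed periods the increase $b^{t,t'+\delta'}_i - b^{t,t'}_i$ is trapped in $[\pi\delta',\,2\pi\delta']$. Meanwhile the remaining window at $t'$ exceeds that at $t'+\delta'$ by exactly $\delta'$. Combining the $[\pi,2\pi]$ bounds on the gap at $t'+\delta'$ with the matching $[\pi\delta',2\pi\delta']$ bounds on the increment yields $\pi\le (Q_i-b^{t,t'}_i)/(\text{remaining window at }t')\le 2\pi$, because the $\delta'$-contributions cancel identically on both the lower and the upper side. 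This is precisely the condition Proposition~\ref{prop1} requires, giving $\mathcal{C}_{b^{t,t'}_i}\neq\varnothing$.

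The delicate part is the duration bookkeeping: the denominator in Proposition~\ref{prop1} must be read as the \emph{remaining} charging horizon measured from the current look-ahead time, so that advancing from $t'$ to $t'+\delta'$ simultaneously raises the SOC and shrinks the horizon by the same $\delta'$. The whole proof hinges on these two effects being governed by the identical rate interval $[\pi,2\pi]$ of Definition~\ref{def1}, which is exactly what makes the $\delta'$ terms telescope. I would therefore be careful to record that $\delta'$ is small enough that $t'+\delta'$ still lies in $[t',\max n_{ijk})$, so the later remaining horizon is positive and Proposition~\ref{prop1} is applicable, and to dispatch the trivial case $\delta'=0$ separately. As a sanity check on the same statement, I would note the constructive view: any feasible SOC trajectory certifying $\mathcal{C}_{b^{t,t'+\delta'}_i}\neq\varnothing$ can be prepended with the $\delta'$ actual charging steps from $b^{t,t'}_i$ to $b^{t,t'+\delta'}_i$, and since those steps already obey the per-period increment bounds of Definition~\ref{def1}, the concatenation is itself an element of $\mathcal{C}_{b^{t,t'}_i}$.
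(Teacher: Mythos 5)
The paper states this lemma without any proof --- the appendix only proves Theorem~1 and Propositions~1 and~2 --- so there is no authorial argument to compare yours against; your proposal has to stand on its own, and it essentially does. Your telescoping reduction is sound: if $(Q_i-b^{t,t'+\delta'}_i)$ lies in $[\pi W,\,2\pi W]$ for the remaining window $W$ at $t'+\delta'$, and the elapsed increment $b^{t,t'+\delta'}_i-b^{t,t'}_i$ lies in $[\pi\delta',\,2\pi\delta']$ because each charging period adds $\pi(k+1)\in\{\pi,2\pi\}$, then the sum lies in $[\pi(W+\delta'),\,2\pi(W+\delta')]$ and Proposition~\ref{prop1} gives non-emptiness at $t'$. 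Your constructive alternative (prepend the $\delta'$ realized SOC steps, which already satisfy the per-period inequalities of Definition~\ref{def1}, to any trajectory certifying $\mathcal{C}_{b^{t,t'+\delta'}_i}\neq\varnothing$) is if anything cleaner, since it avoids Proposition~\ref{prop1} entirely and works directly from the cone's defining inequalities. The one genuine caveat is the one you already flag: Proposition~\ref{prop1} as written normalizes by the fixed parking duration $\psi_i$, not by a horizon that shrinks as $t'$ advances, so your cancellation of the $\delta'$ terms requires reinterpreting that denominator as the remaining charging window measured from the current look-ahead instant. That reinterpretation is the only one under which the lemma is non-vacuous, so it is a defensible reading rather than a gap, but you should state it as an explicit assumption (or route the whole proof through the constructive concatenation, which does not need it). You should also note, as you implicitly do, that the lower bound $\pi\delta'$ on the elapsed increment presumes the user is actually charging throughout $[t',t'+\delta')$, which is guaranteed inside the SH session where $\tau\le n_{ijk}$.
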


The distribution of a central optimization problem into EV user level optimizations may introduce infeasible solutions due to the relaxation of the user connection constraints. 
Given charging location and duration decision pair $(\boldsymbol{y}^z,\boldsymbol{n}^z)$ is bounded, Proposition \ref{prop2} shows that the solution obtained by GNE with an embedded consensus-based coordination scheme \cite[e.g.,][]{mirheli2019consensus,niroumand2020joint,9564622,9294741} converges to the solution of \eqref{eqref:ObjFunMain}-\eqref{eqref:cons5Main} for sufficiently large $z$. 


\begin{proposition}\label{prop2}
Given a non-empty $\mathcal{C}_{b^{t,t'}_i}$, suppose the relaxed constraints $\boldsymbol{g}(\boldsymbol{y})$ is continuously differentiable and convex for each charging location and duration decision pair $(\boldsymbol{y},\boldsymbol{n})$. Let $y_{ijk}^{\infty}$ be the convergence subsequent at iteration $z$. Then, $y_{ijk}^{\infty}$ is the solution to GNE with an embedded consensus-based coordination scheme for the sequence of $\rho_z$ and $u_{jk}^{i,z}$, if the following hold:
\begin{subequations}
\begin{align}
    &\sum_{k \in K} \sum_{j \in J} \left\{\eta_{jk}^{i} \nabla g_{jk}^{i}(y_{ijk}^{\infty},y_{ijk}^{\infty}) - M \nu_{jk}^{(2)} + \nu_{jk}^{(3)} \pi (k+1) \right\} \nonumber\\
    &\epc\epc\epc\epc\epc 
     + \nu^{(1)}|J||K| = 0, \label{eqref:prop1_1}\\
    &\sum_{k \in K} \sum_{j \in J} \nu_{jk}^{(2)}  - \sum_{j \in J} \nu_{j}^{(4)} \pi |K| = 0, \label{eqref:prop1_2}\\ 
    & \eta_{jk}^{i} \,g_{jk}^{i}(y_{ijk}^{\infty}) = 0,\, \forall j \in J,\, k \in K, \label{eqref:prop1_3}\\
    & \nu^{(1)} (\sum_{k \in K} \sum_{j \in J} y_{ijk} - 1) = 0, \label{eqref:prop1_4}\\
    & \nu_{jk}^{(2)} (n_{ijk} - M\, y_{ijk} - 1) = 0,\, \forall j \in J,\, k \in K, \label{eqref:prop1_5}\\
    & \nu_{jk}^{(2)}(b_{i}^{t}+\pi(k+1) y_{ijk} - b_{i}^{t+1}) = 0, \forall j \in J, k \in K, \label{eqref:prop1_6}\\
    & \nu_{jk}^{(3)} (Q_{i} - b_{i}^{t} - \sum_{k \in K} \pi (k+1) n_{ijk}) = 0,\,\forall j \in J, \label{eqref:prop1_7}\\
    & \eta_{jk}^{i} \ge 0,\, \forall j \in J,\, k \in K, \label{eqref:prop1_8}\\
    & \nu^{(1)} \ge 0, \label{eqref:prop1_9}\\
    & \nu_{jk}^{(2)},\,\nu_{jk}^{(3)},\, \forall j \in J,\, k \in K, \label{eqref:prop1_10}\\
    & \nu_{j}^{(4)} \ge 0,\, \forall j \in J, \label{eqref:prop1_11} 
\end{align}
\end{subequations}
where $\nu^{(1)}$, $\boldsymbol{\nu^{(2)}}$, $\boldsymbol{\nu^{(3)}}$, $\boldsymbol{\nu^{(4)}}$, and $\boldsymbol{\eta}$ denote the Lagrangian multipliers of constraints \eqref{eqref:cons1Main}-\eqref{eqref:cons5Main}. 
\end{proposition}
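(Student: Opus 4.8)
The plan is to treat this as a convergence-to-KKT argument for the sequential exponential-penalty VI scheme of \cite{pang2005quasi} that underlies the GNE procedure. Recall that at each outer iteration $z$ every user $i$ solves the convex penalized problem \eqref{eqref:ObjFun2} subject to the private constraints \eqref{eqref:cons1Main}--\eqref{eqref:cons4Main}. First I would write the first-order (KKT) conditions of this subproblem: attaching multipliers $\nu^{(1)}$, $\boldsymbol{\nu^{(2)}}$, $\boldsymbol{\nu^{(3)}}$, $\boldsymbol{\nu^{(4)}}$ to constraints \eqref{eqref:cons1Main}--\eqref{eqref:cons4Main} and differentiating \eqref{eqref:ObjFun2} with respect to $y_{ijk}$ and $n_{ijk}$. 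The penalty term contributes the gradient $u_{jk}^{i,z}\exp(\rho_z g_{jk}^{i})\,\nabla g_{jk}^{i}$ to the stationarity equation, so the effective coefficient multiplying $\nabla g_{jk}^{i}$ at iteration $z$ is $\eta_{jk}^{i,z}:=u_{jk}^{i,z}\exp(\rho_z g_{jk}^{i}(\boldsymbol{y}^z))$. These two stationarity equations are precisely the finite-$z$ analogues of \eqref{eqref:prop1_1}--\eqref{eqref:prop1_2}.

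Next I would pass to the limit along the convergent subsequence $\boldsymbol{y}^z\to y_{ijk}^{\infty}$, whose existence is guaranteed by the stated boundedness of $(\boldsymbol{y},\boldsymbol{n})$ together with Bolzano--Weierstrass. Because $g_{jk}^{i}$ is continuously differentiable, $\nabla g_{jk}^{i}(\boldsymbol{y}^z)\to \nabla g_{jk}^{i}(y_{ijk}^{\infty},y_{ijk}^{\infty})$; similarly the private-constraint multipliers converge (after a further subsequence) to limits satisfying the sign restrictions \eqref{eqref:prop1_8}--\eqref{eqref:prop1_11}. Defining $\eta_{jk}^{i}:=\lim_{z\to\infty}\eta_{jk}^{i,z}$ and passing to the limit in the two stationarity equations yields exactly \eqref{eqref:prop1_1} and \eqref{eqref:prop1_2}, while the limiting complementary-slackness relations for \eqref{eqref:cons1Main}--\eqref{eqref:cons4Main} give \eqref{eqref:prop1_4}--\eqref{eqref:prop1_7}.

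The hard part is showing that $\eta_{jk}^{i,z}=u_{jk}^{i,z}\exp(\rho_z g_{jk}^{i}(\boldsymbol{y}^z))$ converges to a finite, nonnegative limit and that this limit simultaneously enforces primal feasibility and complementarity \eqref{eqref:prop1_3}. This is the usual delicate balance in exponential-penalty / augmented-Lagrangian methods: as $\rho_z\uparrow\infty$, any persistent violation $g_{jk}^{i}(\boldsymbol{y}^z)>0$ would blow the exponential up, so the multiplier update \eqref{eqref:updateU} and the monotone growth $\rho_z<\rho_{z+1}$ must be used to force $\limsup_z g_{jk}^{i}(\boldsymbol{y}^z)\le 0$, giving feasibility $g_{jk}^{i}(y_{ijk}^{\infty})\le 0$. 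One then argues the dichotomy: if $g_{jk}^{i}(y_{ijk}^{\infty})<0$ the exponential factor drives $\eta_{jk}^{i,z}\to 0$, whereas if $g_{jk}^{i}(y_{ijk}^{\infty})=0$ the bounded update keeps $\eta_{jk}^{i}$ finite and nonnegative; either way $\eta_{jk}^{i}\,g_{jk}^{i}(y_{ijk}^{\infty})=0$, which is \eqref{eqref:prop1_3}, and \eqref{eqref:prop1_8} holds. The nonemptiness of $\mathcal{C}_{b^{t,t'}_i}$ from Proposition \ref{prop1} is what guarantees each subproblem is feasible, so the SOC-coupling constraints \eqref{eqref:cons3Main}--\eqref{eqref:cons4Main} remain consistent and the multipliers cannot diverge.

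Finally, I would invoke convexity to upgrade necessity to sufficiency: since each user's penalized objective and the relaxed constraints $\boldsymbol{g}(\boldsymbol{y})$ are convex and continuously differentiable, the limiting KKT system \eqref{eqref:prop1_1}--\eqref{eqref:prop1_11} is not merely necessary but sufficient for $y_{ijk}^{\infty}$ to globally minimize each user's problem given the others' fixed strategies. A collection of such mutually best responses is by definition a solution of the coupled VI/GNE, so $y_{ijk}^{\infty}$ solves the GNE with the embedded consensus-based coordination scheme, completing the argument.
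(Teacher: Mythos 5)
Your proposal is correct and follows essentially the same route as the paper: write the KKT system of the exponentially penalized per-user subproblem at iteration $z$, identify the penalty gradient coefficient $u_{jk}^{i,z}\exp(\rho_z g_{jk}^{i})$ with the limiting multiplier $\eta_{jk}^{i}$, pass to the limit along the convergent subsequence, and close with a constraint-qualification/optimality argument (the paper appeals to Mangasarian--Fromowitz, you to KKT sufficiency under convexity). Your treatment is in fact more complete than the paper's, which states the finite-$z$ system \eqref{eqref:proof1}--\eqref{eqref:proof6} and asserts the link to the limiting conditions without spelling out the dichotomy that forces $\eta_{jk}^{i}\,g_{jk}^{i}(y_{ijk}^{\infty})=0$.
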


\begin{proof}
See Appendix. 
\end{proof}

\section{NUMERICAL EXPERIMENTS}\label{sec:numericalresults}
\noindent The solution technique proposed in Section \ref{sec:Algorithm} is applied to a hypothetical and a real-world case study to assess the computational performance and solution quality. The methodology is coded in Java and run on a desktop computer with octa-core 3.1 GHz CPU and 64 GB of memory. A Poisson distribution is used to generate the initial charging demand pattern (i.e., users $i\in \mathcal{D}^t$) for five different time-of-days in a business day, i.e., early AM, AM peak, mid-day, PM peak, and evening. Additionally, we assume that the length of charging follows an exponential distribution \cite{alizadeh2013scalable}. It is also assumed that the charging rate at each facility follows a normal distribution with the mean of nominal rate $\boldsymbol{\lambda}$ (i.e., 6.2 $Kw$ for slow chargers and 12.5 $Kw$ for fast chargers per time period) and the standard deviation of 10$\%$. The value of $\pi$ is set to the charging rate of slow chargers, i.e., 6.2 $Kw$. We call the CPLEX library in JAVA to solve the optimization model in \eqref{eqref:cons1Main}-\eqref{eqref:cons4Main}, and \eqref{eqref:appBellman} at each iteration.

\vspace{-3mm}
\subsection{Hypothetical Dataset}\label{subsec:dataset}
To proposed model in \eqref{eqref:ObjFunMain}-\eqref{eqref:cons5Main} and hybrid solution framework is applied to a hypothetical network shown in Figure \ref{fig:hypoNetwork}. The network dataset includes 18 nodes and 58 links, where charging facilities are deployed on nodes 6,7,15, and 16 with 5 chargers at each node. EV travel origin is assumed to be from node 1, while nodes 8 and 11 are the destination nodes. 
We have assumed a planning horizon from 8 AM to 5:30 PM with 30 $min$ time periods. The average vehicle arrivals over different time-of-days are respectively assumed to be 20, 25, 20, 15, and 30 for early AM, AM peak, mid-day, PM peak, and evening for a medium demand level. 
The low and high demand levels are assumed to be half and twice the medium demand, respectively.
%
It is assumed that EV users start their travels from origin $o \in \mathcal{O}$, stop in parking lot $j \in J$, 
and leave chargers (i) after reaching to a sufficient SOC for their subsequent trips or (ii) when their maximum allowed charge duration is reached.
We assume parameter $\alpha$ in the objective function to be 1.
Additionally, the weight $\alpha'$ of charging expense for staying longer at charging facilities is set to 0.1.
\vspace{-7pt}
	\begin{figure}[H]
		\begin{center}
	\includegraphics[height=1.7 in]{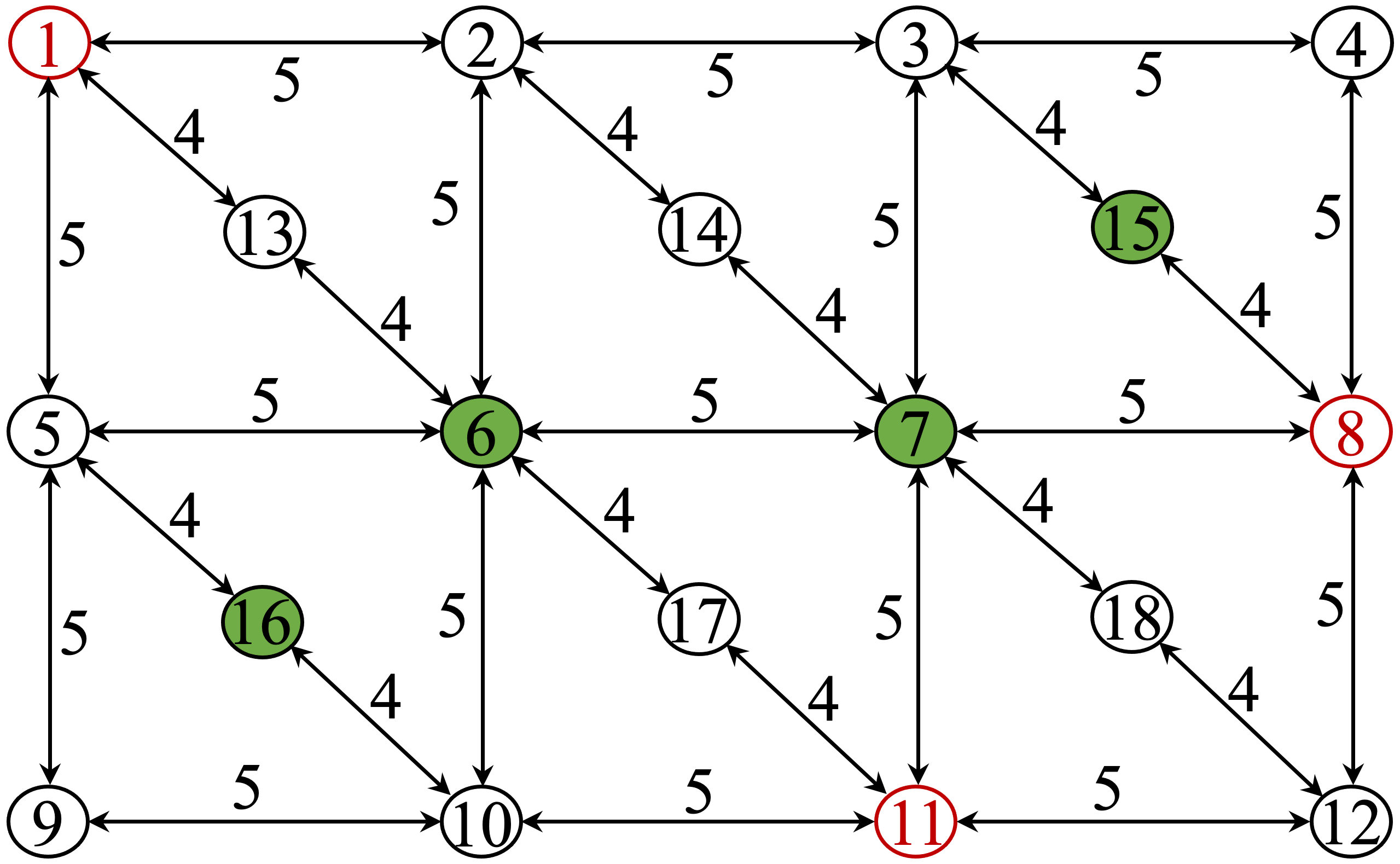}
	\vspace{-10pt}
	\caption{Hypothetical network.}
	\label{fig:hypoNetwork}
		\end{center}
	\end{figure}
\vspace{-10pt}
%
%
%

Figure \ref{fig:hypo_obj} presents the sum of objective values \eqref{eqref:ObjFunMain} of all EVs choosing to charge over all time periods. We can observe a decreasing trend in the total objective value with respect to the GNE iterations that indicates EV users keep forming consensus on optimal solutions. 
We observe considerable reductions in the objective value when the number of iterations increases from 1 to 4. However, the solution does not improve significantly after iteration 4. As computational time has a direct relationship with the number of iterations to reach consensus, we use the iteration 4 results as a base for the remainder of the algorithm,
since no significant improvement (i.e., less than 0.5\%) is observed in the objective value afterward. 
\vspace{-10pt}
	\begin{figure}[H]
		\begin{center}
	\includegraphics[height=1.9 in]{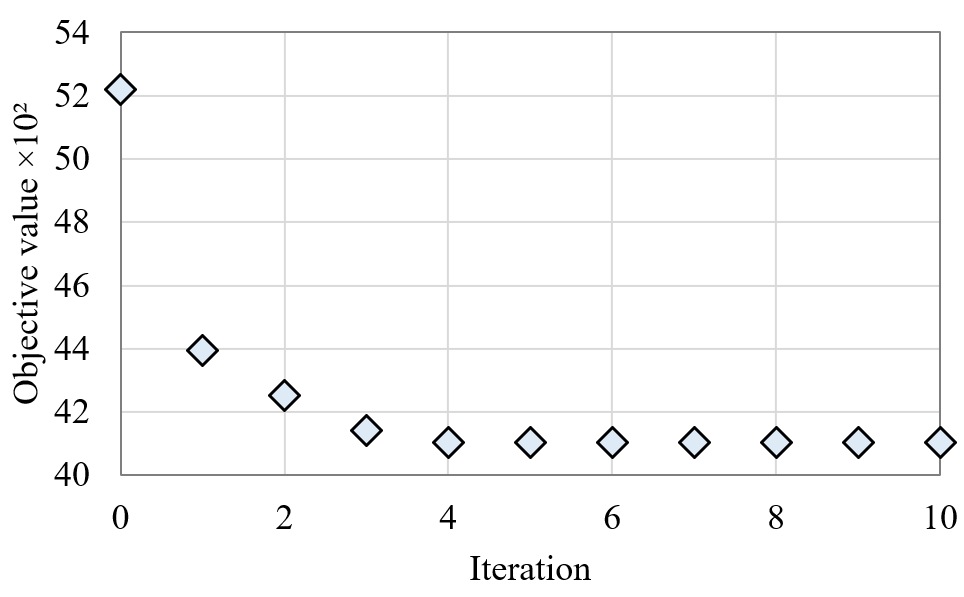}
	\vspace{-10pt}
			\caption{The change in the objective value (\$).}
	\label{fig:hypo_obj}
		\end{center}
	\end{figure}
\vspace{-10pt}
We study the impact of the information exchange among EV users toward reaching consensus in the proposed scheduling technique. To this end, we assess the charging facility occupancy over iterations of the algorithm. Figure \ref{fig:DiffOccu} presents the marginal occupancy of a facility located in the parking lot at node 7 in consecutive iterations. We can observe that the changes in occupancy are significant at the beginning of the algorithm, while the marginal occupancy reaches a steady state with zero changes toward the end due to the users' agreement on the optimal scheduling actions. 
\vspace{-10pt}
 \begin{figure}[H]
 	\begin{center}
 	\includegraphics[height=1.42 in]{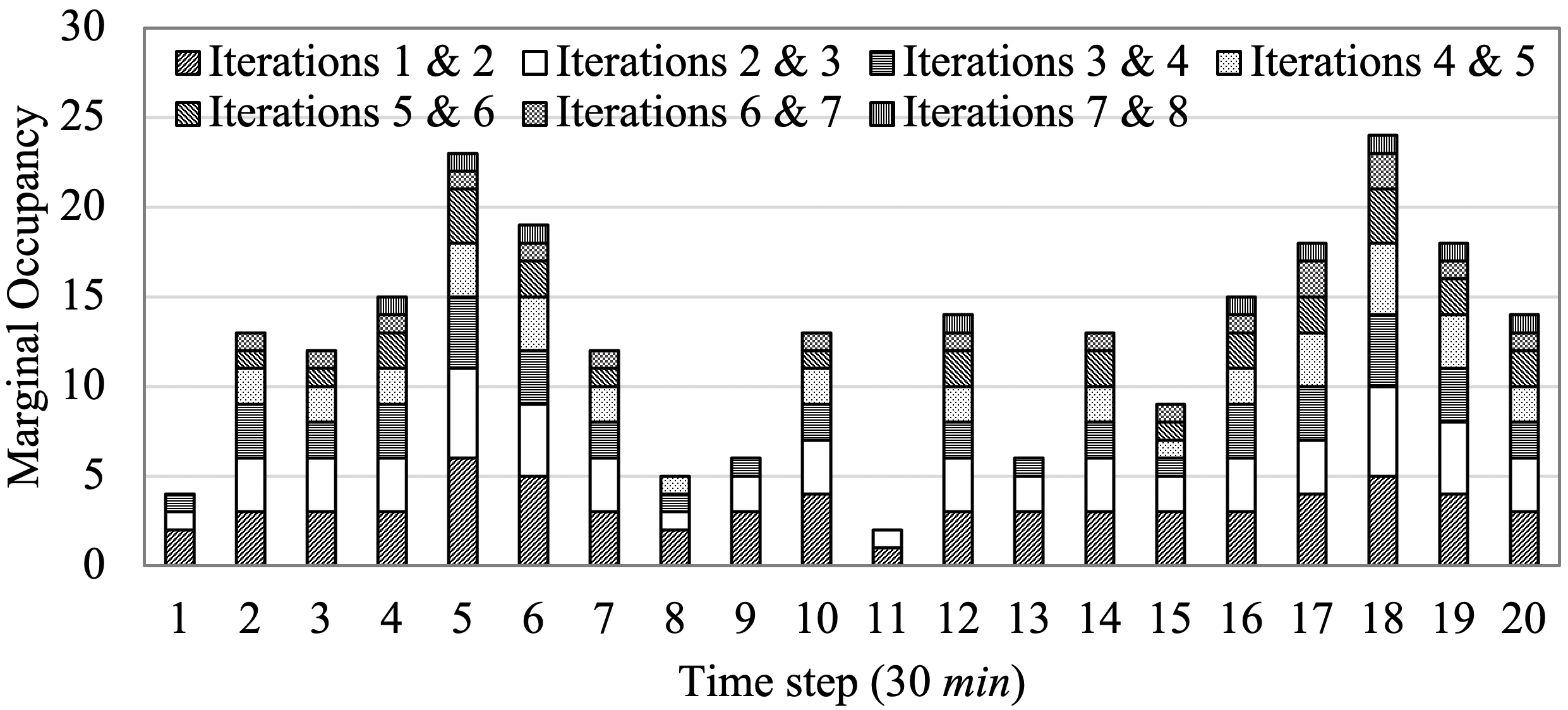}
 	\vspace{-20pt}
 		\caption{Marginal occupancy of consecutive iterations in the hypothetical dataset.}
 	\label{fig:DiffOccu}
 	\end{center}
 \end{figure}
\vspace{-10pt}
	
Figure \ref{fig:hypo_MCTS} indicates the frequency of selecting charging methods considering various average charging rates (i.e., $k=1,2$) for 100 iterations of MCTS over the network. EV users tend to choose (i) low-occupancy charging facilities with more available service time periods left for charging per spot due to the stochasticity involved in charging rates at facilities or (ii) switch to faster charger types to secure enough SOC before their subsequent trips.  
\vspace{-10pt}
\begin{figure}[H]
	\begin{center}
	\includegraphics[height=1.7 in]{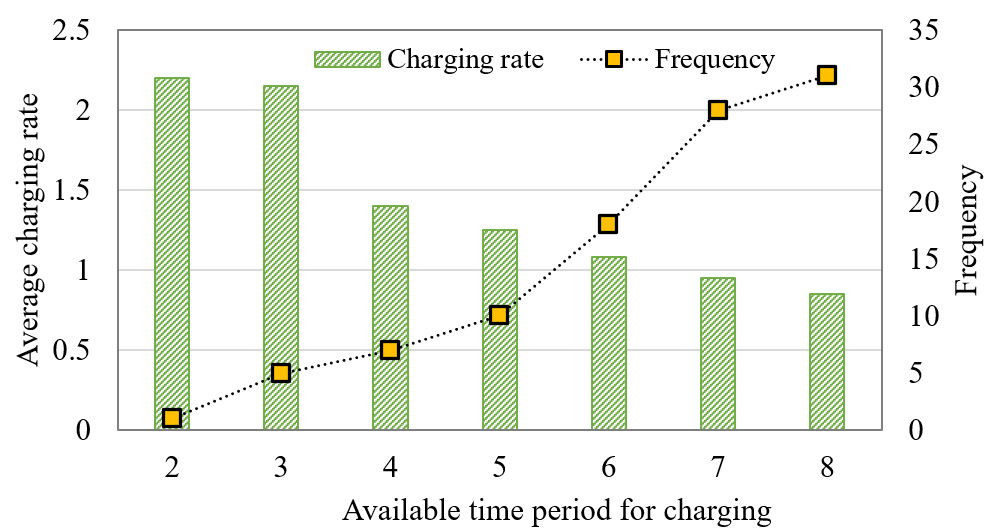}
	\vspace{-20pt}
		\caption{Average charging rate ($Kw$) and frequency for various available charging time periods in MCTS.}
	\label{fig:hypo_MCTS}
	\end{center}
\end{figure}

\vspace{-8mm}	
\subsection{Real-world dataset}\label{Real dataset}
\noindent The proposed methodology is applied to a real-world case study in North Carolina. 
The network includes 42 nodes, 451 links, and 13 parking lots with charging facilities in North Carolina State University campus, as shown in Figure \ref{fig:network_real}. 
The figure indicates the facility locations that EV travelers (i.e., faculty, staff, students, and visitors) tend to charge on campus. 
The origins and destinations are located in Raleigh, Durham, and Chapel Hill. 
The values for time-to-monetary value coefficient $\theta$, charging cost coefficient $\alpha$, and overcharging penalty factor $\alpha'$ are set to 0.1, 10, and 0.1, respectively.
We assume that the number of chargers at each charging facility is 10. 
The demand in this dataset is distributed over time as 140, 210, 170, 120, and 230 for early AM, AM peak, mid-day, PM peak, and evening time-of-days. Similar to the hypothetical dataset, the low and high demand levels are assumed to be half and twice of the medium demand level, respectively. With a 20-time period dynamic scheduling scheme, the real-world dataset includes 1,120,560 decision variables, given an average of 174 users in each time period. 
To generate results for the real-world case study, we run the algorithm up to 10 iterations for GNE since the solution will not improve significantly afterwards (i.e., the change in the objective value sum of all users is below 0.5\%).

Figure \ref{fig:objValue_real} represents the value of travel cost and charging expense in the objective function over iterations. As illustrated, EV users choose the nearest parking lot to their destinations due to a lack of information from other users in the first iterations of the algorithm. However, the exchange of information over iterations improves the perception of EV users about the occupancy and demand of each charging facility for which they may experience slightly higher travel costs. On the other hand, we can observe a decreasing trend in the charging expenses due to the newly perceived penalty costs when the occupancy of all parking lots is updated. Thus, EV users tend to stay in a charging facility to secure just enough SOC for their next trips and avoid the overcharging penalty. 
%
\begin{figure}[H]
	\begin{center}		\includegraphics[height=1.8 in]{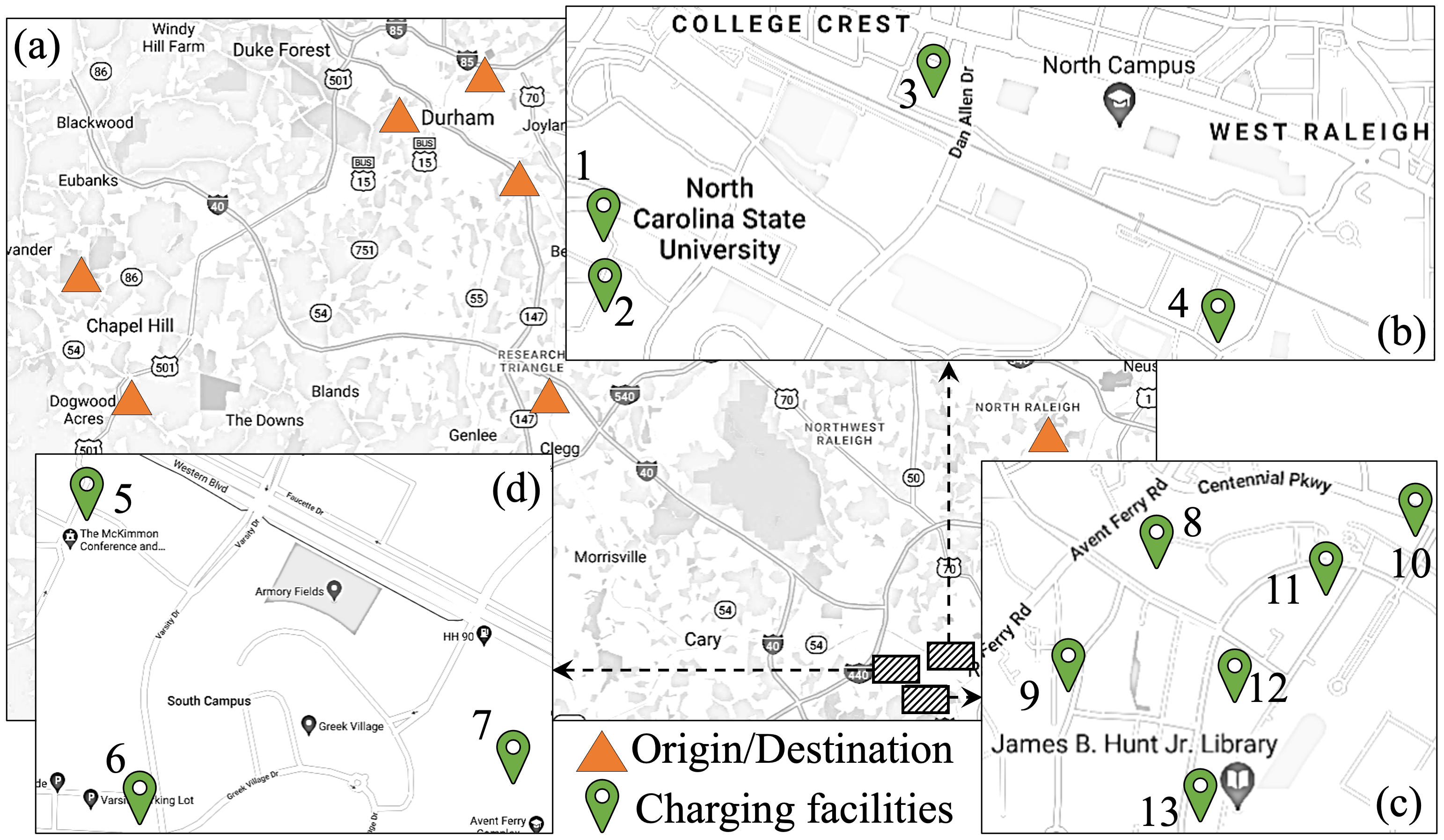}
	\vspace{-7pt}
		\caption{(a) Travel  origins/destinations, NC; (b),(c),(d) Charging facility locations on North Carolina State University campus. [Map source: Google, accessed November 25, 2020].}
	\label{fig:network_real}
	\end{center}
\end{figure}
\vspace{-20pt}

\begin{figure}[H]
	\begin{center}
	\includegraphics[height=1.55 in]{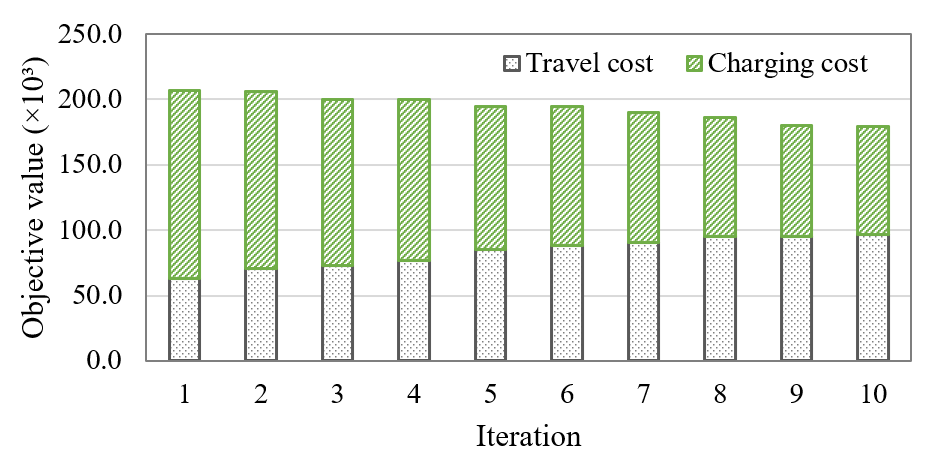}
	\vspace{-20pt}
	\caption{Objective value (\$) over 10 GNE iterations.}
	\label{fig:objValue_real}
	\end{center}
\end{figure}
\vspace{-8pt}
Figure \ref{fig:STDcharging} presents the average charging rate as well as the standard deviation for EV users who are realized at time period 18 (i.e., 4:30-5:00 PM) in the medium demand case. 
The standard deviation captures the reaction of users in choosing the charger types. EV users begin to select slow charging spots as they are less expensive. However, overcharging attempts lead to extra waiting costs for incoming users. Therefore, they will choose fast chargers to avoid the overcharging penalty.
Based on the exchanged information over 10 iterations of GNE, EV users will charge at facilities with higher charging rates as more appealing facilities (e.g., with lower rates and consequently lower costs) continue to get occupied by other users. Although the average charging rate approximately follows an increasing trend, its standard deviation fluctuates until all users receive updated information on charging spot occupancy.

Figure \ref{fig:SOC} illustrates the average SOC of EV users who select charging at facility 4 in medium demand case. As observed, users (with sufficient SOC to reach facility 4 before their SOC falls below 1 $Kwh$) need to stay in the charging spots at node 4 to be able to fulfill their upcoming trips. Note that the maximum distance to final destinations is assumed to be 48 $miles$, which requires 16.7 $Kwh$ for an average EV (i.e., with 2.91 $mile/Kwh$) in time period 3 (9:00-9:30 AM) and time period 12 (1:30 - 2:00 PM).
\vspace{-7pt}
\begin{figure}[H]
	\begin{center}
	\includegraphics[height=1.85 in]{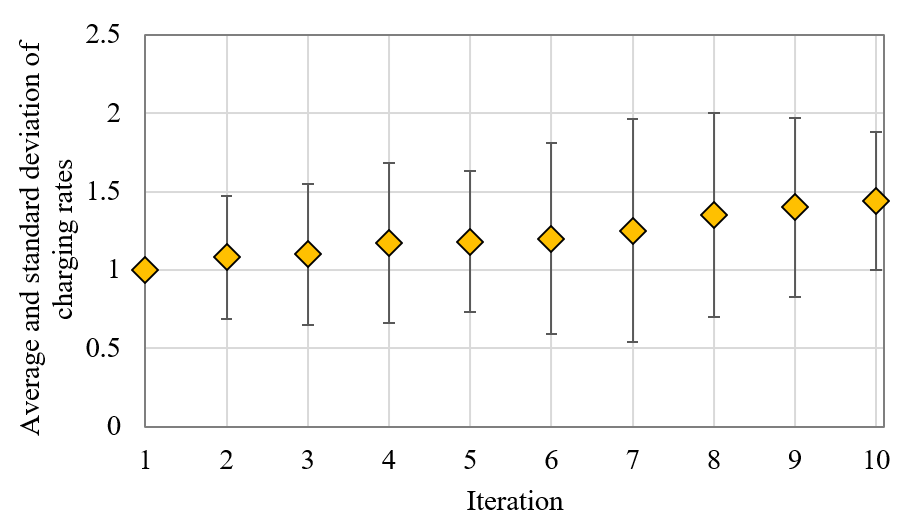}
		\vspace{-22pt}
		\caption{Average and standard deviation of charging rates ($Kw$) over 10 GNE iterations.}
	\label{fig:STDcharging}
	\end{center}
\end{figure}

%
\vspace{-15pt}
\begin{figure}[H]
	\begin{center}		\includegraphics[height=1.75 in]{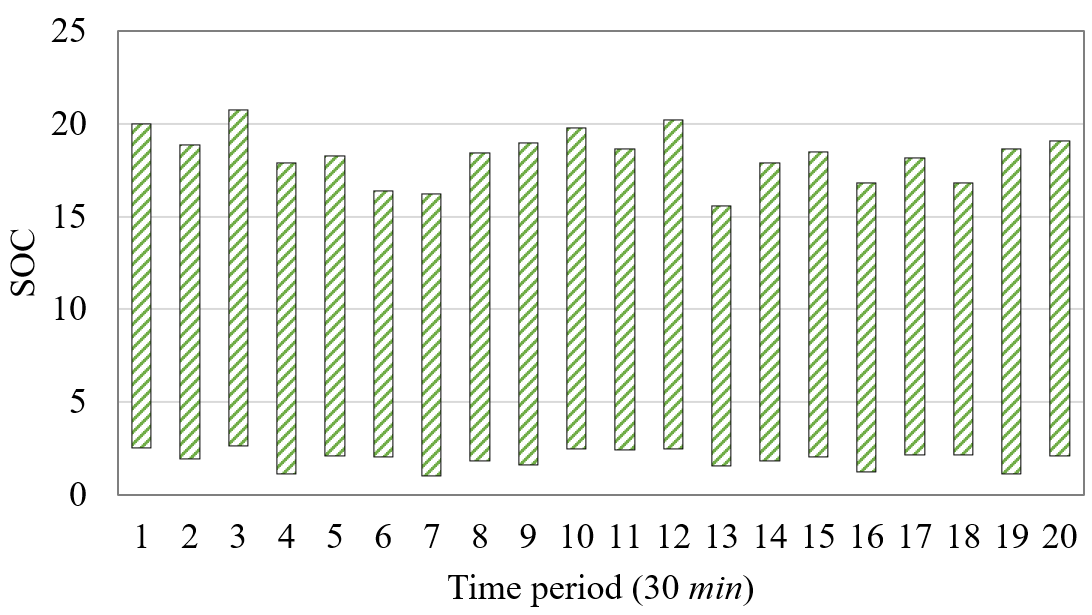}
		\vspace{-10pt}
		\caption{SOC changes ($Kwh$) of EVs during charging at facility 3.}
		\label{fig:SOC}
	\end{center}
\end{figure}

\vspace{-15pt}
Similar to the hypothetical case study, we analyze the occupancy over iterations. Figure \ref{fig:DiffOccu_real} presents changes in the occupancy of the charging facility located in the lot at node 4 for the medium demand scenario. We can observe significant changes in each two consecutive iterations over the first iterations of the algorithm. Similarly, the exchange of information among users leads to a steady state with zero changes toward the last iteration. The CPU time of this scenario is 3.1 $hr$. The results are obtained in real-time as the optimal solutions are found within the duration of each time period that is 30 $min$. 

We have also conducted a sensitivity analysis to evaluate the impact of the overcharging penalty factor $\alpha'$ on the objective value \eqref{eqref:ObjFunMain}, considering (1) total travel cost and (2) combined charging and penalty costs, imposed to each user $i$. As Figure \ref{fig:DiffOccu_real2} indicates, 
higher values of $\alpha'$ change the charging pattern: some EV users leave charging facilities with a sufficient charge to avoid the overcharging penalty. Therefore, we see a slight increase in the travel cost corresponding to movements from charging spots to regular parking spots. 
Moreover, we notice that penalty cost significantly decreases as users tend to avoid the increasing penalty. 
\vspace{-7pt}
\begin{figure}[H]
	\begin{center}
	\includegraphics[height=1.85 in]{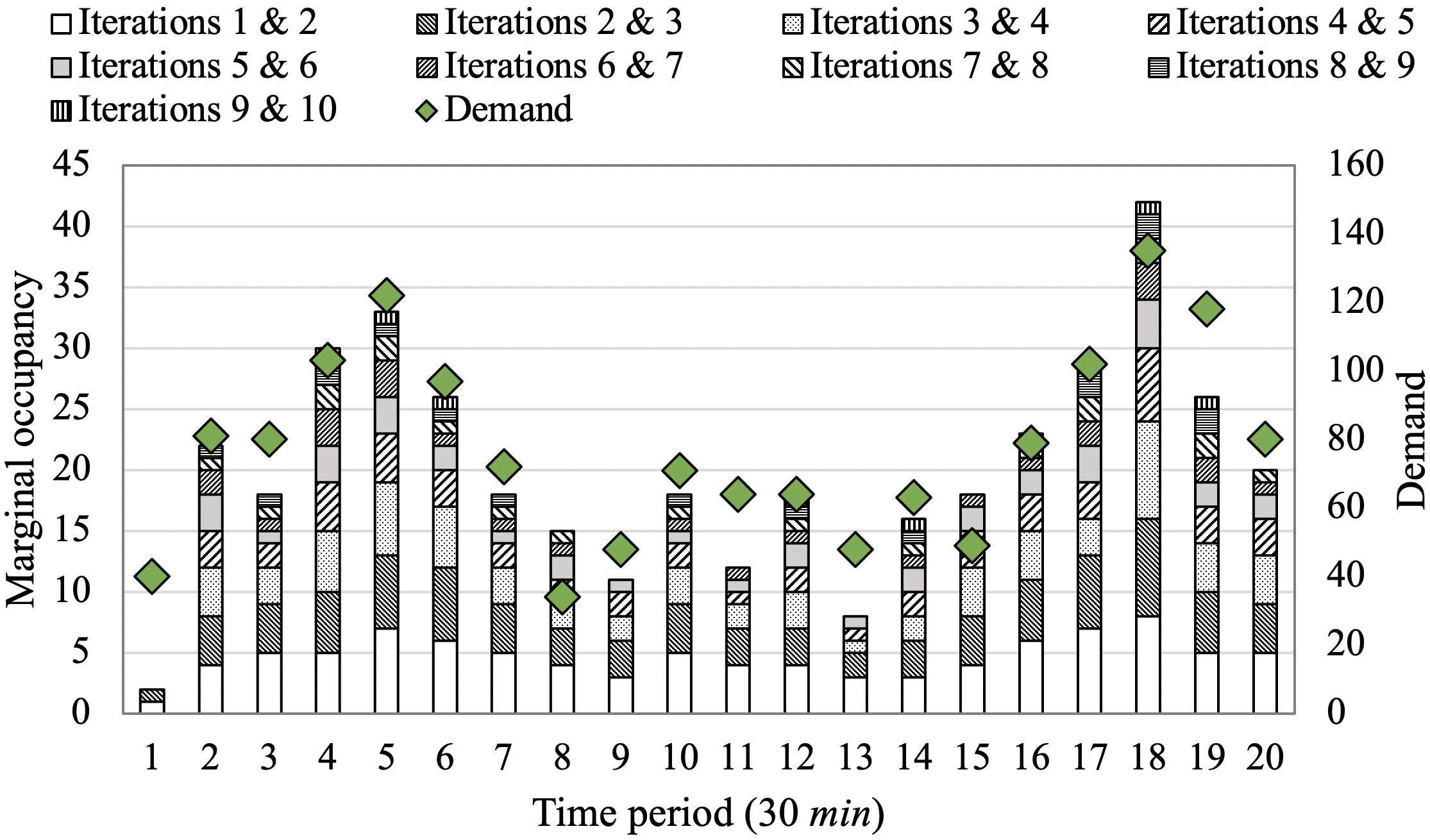}
		\vspace{-20pt}
		\caption{Marginal occupancy of consecutive iterations.}
	\label{fig:DiffOccu_real}
	\end{center}
\end{figure}
\vspace{-20pt}

\begin{figure}[H]
	\begin{center}
	\includegraphics[height=1.3 in]{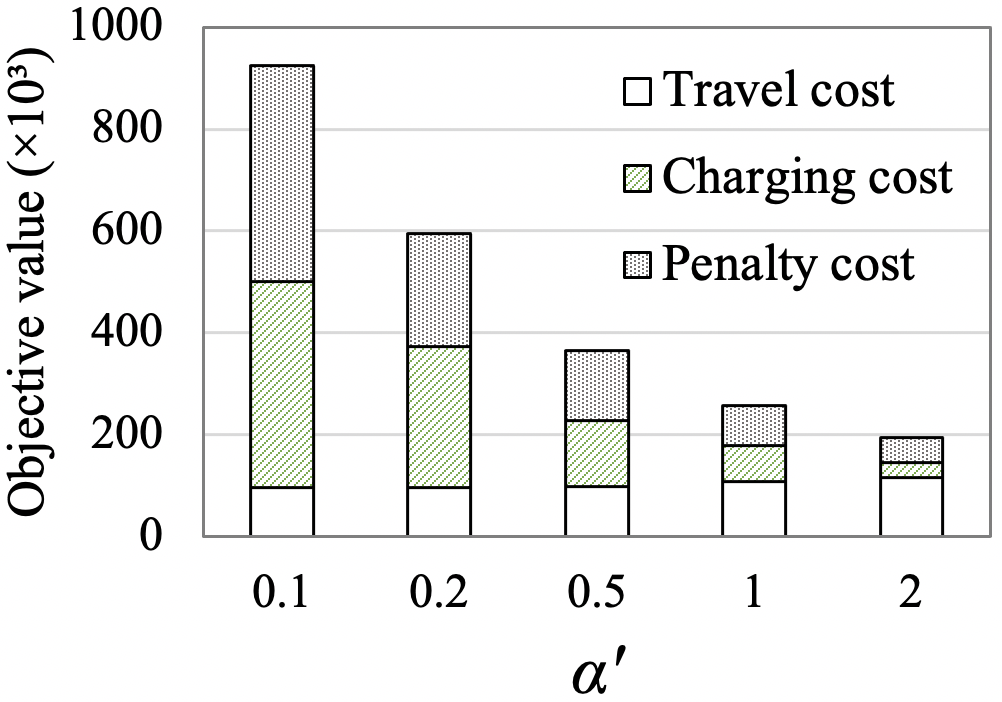}
	\vspace{-10pt}	
	\caption{Objective value ($\$$) versus coefficient of penalty term ($\alpha'$).}
	\label{fig:DiffOccu_real2}
	\end{center}
\end{figure}
\vspace{-13pt}
The solutions of the consensus-based approach are compared to those of a benchmark strategy that prioritizes EV users upon their arrivals. Let us describe how the benchmark works. At each time period, the users are assigned to available charging spots following a first-come-first-serve scheme given the updated system state (e.g., charger availability, charging demand). If the capacity of a charging facility is reached, EVs will form a queue to get served without a look-ahead consideration. The upcoming charging demand and charger availability are ignored in the user assignments to chargers at each time period. This implies that the queue length does not affect the user assignments. 
Figure \ref{fig:priority} and Table \ref{table:priority} present objective values obtained by our consensus-based (with look-ahead policy) versus priority-based scheduling after 10 iterations over the entire horizon. 
%
\vspace{-20pt}
\begin{figure}[H]
	\begin{center}
	\includegraphics[height=1.31 in]{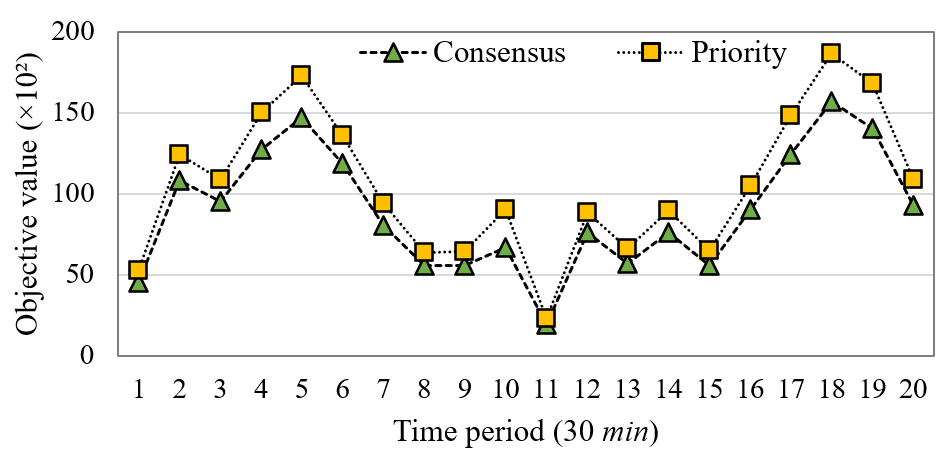}
	\vspace{-7pt}
		\caption{The comparison of objective value ($\$$) in consensus-based and prioritized scheduling schemes.}
	\label{fig:priority}
	\end{center}
\end{figure}
%
\begin{table}[H]
	\caption{Comparison with benchmark solutions.}
	\begin{center}
		\small
	\begin{tabular}{M{1.5cm} M{1.5cm} M{1.5cm} M{1.5cm}}
			\hline\hline
			 & Consensus-based & Priority-based & Difference $(\%)$\\
			\hline
			Objective value $(\$)$ & $1.79\times 10^5$ & $2.10\times 10^5$ & 17.58 \\
			CPU time $(hr)$ & 2.83 & 2.07 & -26.85 \\
			\hline
		\end{tabular}
	\end{center}
	\label{table:priority}
\end{table}

\vspace{-7mm}
\section{Conclusions} \label{sec:conclusion}
\noindent This study develops a charge scheduling scheme for EV users over time and space, given facilities with different charging types and limited capacities. EV users obtain information on the charger availability at each facility. Additionally, EVs' SOC, subsequent travel plans, and departure times are shared with the network operator upon their arrival at the facility. The objective is to determine the optimal charging schedule for each user that minimizes the (i) total travel costs and charging expenses and (ii) overcharging penalty. The problem is formulated as a DP model that captures the state of the system (i.e., EV arrivals at facilities, SOC, available chargers, pricing scheme, and waiting times) and takes actions using a stochastic look-ahead technique. The model is first distributed to user-level optimizations using a GNE approach that relaxes the coupling constraints connecting users. A consensus-based coordination scheme is incorporated into the GNE procedure to push the user-level solutions toward system-level optimality and find near-optimal solutions. Then, a Monte Carlo tree search algorithm with an embedded tree policy and an SH is proposed to efficiently capture the uncertainties and approximate the value function.
The tree policy evaluates the available actions and estimates the value functions over time, while the SH predicts the value of recently added nodes to the tree, under uncertain charging rates, to determine the near-optimal charging schedules. Numerical experiments on a hypothetical and a real-world dataset confirm the solution quality and efficiency of the proposed methodology. The results show that EV users tend to choose the charging facilities with more available service time periods left to reduce the uncertainty involved in the charging rate and guarantee enough SOC before leaving the charging facility. 
It will be very interesting to study the computational performance of the proposed hybrid solution technique based on a range of problem sizes considering different network instances and EV charging demand. Decomposition techniques, e.g., column generation based approaches \cite{Asya-Conf-2021,hajibabai2019patrol}, could be used to further improve the computational efficiency.
Another future research direction would be the inclusion of agency-level decisions (e.g., charging pricing decisions, infrastructure decisions) into the framework and solving the problem as a bi-level program \cite[e.g.][]{Hierarchical-2022,Pricing-Conf-2020,zhou2005generalized,nourinejad2016equilibrium,hajibabai2014joint}.
It will also be interesting to gather real-world EV charging demand data to analyze the charger utilization with various scheduling schemes given user preferences over time.
\renewcommand{\baselinestretch}{1.0}
 \vspace{-2mm}
\small
\bibliographystyle{IEEEtran}
\bibliography{main}

\vspace{-3mm}
\section*{Appendix} \label{proofs}

\noindent \textbf{Proof of Theorem 1:}
\begin{proof}\nobreak\ignorespaces
As denoted, $\Tilde{V}^{t}_{a}(S^{t}_{a}) = \mathbb{E}(\varphi^{t}_{i}(\mathcal{B}^t,\mathcal{J}^t,\boldsymbol{a}))$. Thus, based on the definition of minimization we have:
$\Tilde{V}^{t}_{a}(S^{t}_{a}) = \mathbb{E}( \varphi^{t}_{i}(\mathcal{B}^t,\mathcal{J}^t,\boldsymbol{a})) \ge \mathbb{E}(\operatornamewithlimits{min}_{\boldsymbol{a}} \varphi^{t}_{i}(\mathcal{B}^t,\mathcal{J}^t,\boldsymbol{a}))$.
\end{proof}

\noindent \textbf{Proof of Proposition 1:}
\begin{proof}\nobreak\ignorespaces
We first prove the necessity. If there exists a feasible SOC path (i.e., $\psi_i \ge n_{ijk}$), then $b^{t,t'}_i + \pi n_{ij,0} \le Q_{i} \le b^{t,t'}_i + 2\pi n_{ij,1}$. 
In other words, the charging threshold can be reached within the parking duration based on the feasible charging rates. 
To explore the sufficiency, given $\psi_{i}^{-1}(Q_{i}-b^{t,t'}_i) \in [\pi, 2\pi]$, we can obtain that $b^{t,t'-1}_i + \pi n_{ij,0} \le b^{t,t'}_i \le b^{t,t'-1}_i + 2\pi n_{ij,1}$. For $n_{ij,1} \ge 1$ and know that $2\pi y_{ij,1} \le 2\pi n_{ij,1}$, we can conclude that $\mathcal{C}_{b^{t,t'}_i}$ is not empty.
\end{proof}

\noindent \textbf{Proof of Proposition 2:}
\begin{proof}\nobreak\ignorespaces
For all sufficiently large $z$, we claim multipliers $\boldsymbol{\nu}^{(1)}_{z},\,\boldsymbol{\nu}^{(2)}_{z},\,\boldsymbol{\nu}^{(3)}_{z}$, and $\boldsymbol{\nu}^{(4)}_{z}$ exist such that:\\

\begin{subequations}
\begin{align}
    & \sum_{k \in K} \sum_{j \in J} \Bigl( \big( \theta (v_{oj} + \mu_{j\delta}) + \theta' \widehat{\mathcal{L}}^t_{jk} \big)y_{ijk}^{z} + \alpha p_{jk}^{t} n_{ijk}^{z} + \nonumber\\
    &\epc\epc \alpha'(\psi_{i} - n_{ijk}^{z}) 
    + \rho_z^{-1} u_{jk}^{i,z}\exp({\rho_z g_{jk}^{i}(y_{i,jk}^{z}, y_{i,jk}^{z})}) -\nonumber\\
    &\epc\epc M \nu_{jk}^{(2)} + \nu_{jk}^{(3)} \pi (k+1) \Bigr) +  \nu^{(1)}|J||K|  = 0,\label{eqref:proof1}\\
    & \sum_{k \in K} \sum_{j \in J} \Bigl( \big( \theta (v_{oj} + \mu_{j\delta}) + \theta' \widehat{\mathcal{L}}^t_{jk} \big)y_{ijk}^{z} + \alpha p_{jk}^{t} n_{ijk}^{z} +\nonumber\\
    &\epc\epc \alpha'(\psi_{i} - n_{ijk}^{z}) +\nu_{jk}^{(2)} \Bigr) - \sum_{j \in J} \nu_{j}^{(4)} \pi |K| = 0, \label{eqref:proof2}\\
    & \nu^{(1)} (\sum_{k \in K} \sum_{j \in J} y_{ijk}^{z} - 1) = 0, \label{eqref:proof3}\\
    & \nu_{jk}^{(2)} (n_{ijk} - M\, y_{ijk}^{z} - 1) = 0,\, \forall j \in J,\, k \in K, \label{eqref:proof4}\\
    & \nu_{jk}^{(2)} (b_{i}^{t} + \pi (k+1) y_{ijk}^{z} - b_{i}^{t+1}) = 0,\, \forall j \in J,\, k \in K, \label{eqref:proof5}\\
    & \nu_{jk}^{(3)} (Q_{i} - b_{i}^{t} - \sum_{k \in K} \pi (k+1) n_{ijk}^{z}) = 0,\,\forall j \in J. \label{eqref:proof6}
\end{align}
\end{subequations}
Constraints \eqref{eqref:proof1}-\eqref{eqref:proof6} hold given \eqref{eqref:prop1_1}, \eqref{eqref:prop1_9}-\eqref{eqref:prop1_11} and along with the following condition:
\begin{align}
&\sum_{k \in K} \sum_{j \in J} ( - M \nu_{jk}^{(2)} + \nu_{jk}^{(3)} \pi (k+1))+ \nu^{(1)}|J||K| = 0\nonumber.
\end{align}
The aforementioned conditions are equivalent to Mangasarian—Fromowitz constraint qualification \citep{outrata2013nonsmooth} at solution $(\boldsymbol{y}^z, \boldsymbol{n}^z)$ and thus, the claim holds.
\end{proof}
\end{document}